\newtheorem{prethm}{{\bf Theorem}}[section]
\newenvironment{thm}{\begin{prethm}{\hspace{-0.5
em}{\bf.}}}{\end{prethm}}
\newtheorem{prepro}{{\bf Theorem}}
\newtheorem{precor}[prethm]{{\bf Corollary}}
\newenvironment{cor}{\begin{precor}{\hspace{-0.5
em}{\bf.}}}{\end{precor}}
\newtheorem{preconj}[prethm]{{\bf Conjecture}}
\newtheorem{preremark}[prethm]{{\bf Remark}}
\newtheorem{prelem}[prethm]{{\bf Lemma}}
\newenvironment{lem}{\begin{prelem}{\hspace{-0.5
em}{\bf.}}}{\end{prelem}}
\newtheorem{preque}[prethm]{{\bf Question}}
\newtheorem{preobserv}[prethm]{{\bf Observation}}
\newtheorem{preproposition}[prethm]{{\bf Proposition}}
\newtheorem{preproof}{{\bf Proof.}}
\newtheorem{preprooff}{{\bf Proof}}
\newenvironment{proof}[1]{\begin{preproof}{\rm
#1}\hfill{$\Box$}}{\end{preproof}}
\newtheorem{preproofs}{{\bf Second proof of }}
\newtheorem{preprooft}{{\bf Third proof of }}
\newtheorem{preproofF}{{\bf Proofs of}}
\title{\bf\Large 
Bipartite partition-connected factors with small degrees
}
\author{{\normalsize{\sc Morteza Hasanvand${}$} }\vspace{3mm}
\\{\footnotesize{${}$\it Department of Mathematical
 Sciences, Sharif
University of Technology, Tehran, Iran}}
{\footnotesize{}}\\{\footnotesize{ $\mathsf{hasanvand@alum.sharif.edu }$ }}}
\date{}
\def\epsilon {\text{$\varepsilon$}}
\def\OMEGA {\text{$\Theta$}}
\begin{document}
\maketitle
\begin{abstract}{
In this paper, we show that every $2m$-partition-connected graph $G$ has a bipartite $m$-partition-connected factor $H$ such that for each vertex $v$, $d_H(v)\le \lceil \frac{3}{4}d_G(v)\rceil$. A graph $H$ is said to be $m$-partition-connected, if it contains $m$ edge-disjoint spanning trees. As an application, we conclude that tough enough graphs with appropriate number of vertices have a bipartite $m$-partition-connected factor with maximum degree at most $3m+1$. Finally, we prove that tough enough graphs of order at least $3k$ admit a bipartite connected factor whose degrees lie in the set $\{k,2k,3k,4k\}$.
\\
\\
\noindent {\small {\it Keywords}:
\\
Toughness;
bipartite factor;
chromatic number;
connected factor;
partition-connected;
modulo factor.

}} {\small
}
\end{abstract}
%
%
%==============================================================================
%
%
%
%
%
%
%
%
%
%
%
%
%									Introduction
%==============================================================================
\section{Introduction}
%==============================================================================
%									Definitions
In this article, all graphs have no loop, but multiple edges are allowed.
 Let $G$ be a graph. 
The vertex set and the edge set of $G$ are denoted by $V(G)$ and $E(G)$, respectively. 
The degree $d_G(v)$ of a vertex $v$ is the number of edges of $G$ incident to $v$.
Let $A$ and $B$ be two subsets of $V(G)$.
This pair is said to be {\bf intersecting}, if $A\cap B \neq \emptyset$.
Let $l$ be a real function on subsets of $V(G)$ with $l(\emptyset) =0$.
For notational simplicity, we write $l(G)$ for $l(V(G))$ and write $l(v)$ for $l(\{v\})$.
The function $l$ is said to be {\bf supermodular}, if for all vertex sets $A$ and $B$,
$l(A\cap B)+l(A\cup B)\ge l(A)+l(B).$
Likewise, $l$ is said to be intersecting supermodular, if for all intersecting pairs $A$ and $B$
the above-mentioned inequality holds.
The set function $l$ is called {\bf nonincreasing}, if $l(A)\ge l(B)$, for all nonempty vertex sets
$A$, $B$ with $A\subseteq B$.
Note that several results of this paper can be hold for real functions $l$ such that $\sum_{v\in A}l(v)-l(A)$ is integer for every vertex set $A$.
For clarity of presentation, we will assume that $l$ is integer-valued.
The graph $G$ is said to be {\bf $l$-edge-connected}, if for all nonempty proper vertex sets $A$,
 $d_G(A) \ge l(A)$, where
$d_G(A)$ denotes the number of edges of $G$ with exactly one end in $A$.
Likewise, the graph $G$ is called {\bf $l$-partition-connected}, 
if for every partition $P$ of $V(G)$,
$e_G(P)\ge \sum_{A\in P}l(A)-l(V(G)),$
where $e_G(P)$ denotes the number of edges of $G$ joining different parts of $P$.
It is known that if $l$ is intersecting supermodular, then 
the
vertex set of $G$ can be expressed uniquely as a disjoint union of vertex sets of some
induced $l$-partition-connected subgraphs.
These subgraphs are called the $l$-partition-connected components of $G$.
An $l$-partition-connected graph $G$ is called {\bf minimally $l$-partition-connected}, 
if for every edge $e$ of $G$, the resulting graph $G-e$ is not $l$-partition-connected.
To measure $l$-partition-connectivity of $G$, we define the parameter
$\OMEGA_l(G)=\sum_{A\in P} l(A)-e_G(P),$ where 
 $P$ is the partition of $V(G)$ obtained from $l$-partition-connected components of $G$.
In ~\cite{P}, it was proved that $\OMEGA_l(G)\le \frac{1}{k}\OMEGA_{kl}(G)$, when $k$ is a real number with $k\ge 1$.
A graph is said to be {\bf $m$-tree-connected}, it has $m$ edge-disjoint spanning trees.
By the main result in~\cite{MR0133253, MR0140438}, a graph $G$ is $m$-tree-connected if and only if it is $m$-partition-connected.
Let $t$ be a positive real number, 
a graph $G$ is said to be {\bf $t$-tough}, if $\omega(G\setminus S)\le \max\{1,\frac{1}{t}|S|\}$ for all $S\subseteq V(G)$. 
The {\bf chromatic number} of a graph $G$ is to the minimum number of colors needed to color the vertices of $G$ such that adjacent vertices admit different colors.
 This number is denoted by $\chi(G)$.
For a set of integers $A$, an {\bf $A$-factor} is a spanning subgraph with vertex degrees in $A$.
A {\bf modulo $k$-regular factor} is a spanning subgraph whose degrees are positive and divisible by $k$.
Throughout this article, all variables $k$ and $c$ are positive and integer, unless otherwise stated, and all variables $m$ are nonnegative and integer.
%==============================================================================
%			
%
%
%
%
%
%
%
%
%
%==============================================================================
%
%

In 1973 Chv\'atal~\cite{MR0316301} conjectured that there exists a positive real number $t_0$ such that every $t_0$-tough graph of order at least three admits a Hamiltonian cycle. In 1989 Win \cite{MR998275} confirmed a weaker version of this conjecture as the following result. In 2000 Ellingham and Zha~\cite{MR1740929} refined this result for graphs with higher toughness by proving that every
$4$-tough graph of order at least three admits a connected $\{2,3\}$-factor.
\begin{thm}{\rm (\cite{MR998275})}\label{thm:Win}
Every
$1$-tough graph admits a connected factor with maximum degree at most $3$.
\end{thm}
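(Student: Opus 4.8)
\medskip
\noindent\textbf{Proof proposal.} The plan is to reduce the statement to the existence of a spanning tree of maximum degree at most $3$, since any spanning tree is in particular a connected factor; for $|V(G)|\le 2$ the claim is trivial, so assume $|V(G)|\ge 3$. The first step is to record what I need from $1$-toughness: taking $t=1$ in the definition shows (with $S=\emptyset$) that $G$ is connected and (with $S\ne\emptyset$) that $\omega(G\setminus S)\le |S|$ for every nonempty $S\subseteq V(G)$. Hence it is enough to show that \emph{if} $G$ possesses no spanning tree of maximum degree at most $3$, \emph{then} some nonempty $S\subseteq V(G)$ has $\omega(G\setminus S)>|S|$; with a little more care the construction in fact yields $\omega(G\setminus S)\ge |S|+2$, the shape of bound occurring in Win's degree-constrained spanning-tree criterion.

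\medskip
Assume then, for contradiction, that no spanning tree of $G$ has maximum degree at most $3$, and choose an \emph{extremal} spanning tree $T$: first minimising the number of vertices of degree at least $4$, then $\sum_{v:\,d_T(v)\ge 4}d_T(v)$, then $\sum_v d_T(v)^2$. By hypothesis $B:=\{v: d_T(v)\ge 4\}$ is nonempty. The key tool is an \emph{exchange lemma}: if $xy\in E(G)$ with $d_T(x)\le 2$, $d_T(y)\le 2$, and the unique $T$-path joining $x$ and $y$ meets $B$, then deleting from $T$ an edge of that path incident to a maximum-degree vertex $z\in B$ on it and inserting $xy$ produces a spanning tree $T'$ in which $d_T(z)$ strictly drops (as does the degree of one neighbour of $z$) while only $d(x)$ and $d(y)$ rise, each by one and each to a value at most $3$; a short check shows $T'$ precedes $T$ in the extremal order, a contradiction. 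So for the chosen $T$ there is no edge of $G$ between two vertices of $T$-degree at most $2$ whose $T$-path contains a vertex of $B$; in particular, two vertices of $T$-degree at most $2$ lying in different branches of $T$ at a vertex of $B$ are non-adjacent in $G$.

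\medskip
The remaining step converts this into a bad cut. Root $T$ at a vertex $u\in B$ with branches $T_1,\dots,T_d$, $d=d_T(u)\ge 4$, and let $S$ be the set of all vertices of $T$ of degree at least $3$. Then $T\setminus S$ is a disjoint union of paths, and the identity $\sum_{v\in S}(d_T(v)-2)=\ell-2$, where $\ell$ is the number of leaves of $T$, gives $\ell\ge |S|+|B|+2$ and hence forces $T\setminus S$ to have at least $|S|+|B|+2>|S|$ path-components. Since $u\in S$ and vertices of $T$-degree at most $2$ in different branches are pairwise non-adjacent in $G$, every component of $G\setminus S$ lies inside a single branch $T_i$; within a branch, two path-components of $T\setminus S$ can be joined by a $G$-edge only when they are separated in $T$ solely by degree-$3$ vertices, and a localised induction peeling off the innermost vertices of $B$ bounds how much they can merge. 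Carrying the count through gives $\omega(G\setminus S)\ge |S|+2$, contradicting $1$-toughness and proving the theorem.

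\medskip
\noindent The step I expect to be the real obstacle is this final count. Deleting an edge at a degree-$3$ vertex never improves the extremal tree, so such vertices may legitimately carry $G$-edges between path-components, and one cannot simply declare the components pairwise non-adjacent in $G\setminus S$; the branch structure at the vertices of $B$ must be organised -- refining the extremal choice of $T$ if necessary -- so that $\omega(G\setminus S)$ still exceeds $|S|$. This bookkeeping, in essence a localised induction anchored at the degree-$\ge 4$ vertices, is the technical core of Win's theorem.
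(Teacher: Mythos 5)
The paper offers no proof of this statement at all: it is quoted directly from Win's 1989 paper, so your argument has to stand on its own, and it does not. You yourself flag the decisive step as unresolved, and that flag is accurate — it is not mere bookkeeping but the entire content of the theorem. Your exchange lemma only handles an edge $xy\in E(G)$ with $d_T(x),d_T(y)\le 2$ whose $T$-path meets a vertex of degree at least $4$; if the $T$-path between $x$ and $y$ passes only through degree-$3$ vertices of $S$, then the swap lowers a degree-$3$ vertex to degree $2$ while raising $x$ or $y$ to degree $3$, which does not precede $T$ in your extremal order, so no contradiction is available and such edges genuinely occur. Consequently, while the count $\mathrm{comp}(T\setminus S)\ge |S|+2$ for $S=\{v: d_T(v)\ge 3\}$ is fine (it follows from $\sum_{v\in S}(d_T(v)-2)=\ell-2$), it does not transfer to $\omega(G\setminus S)$: the path-components of $T\setminus S$ lying on either side of a deleted degree-$3$ vertex may be joined by many $G$-edges, and after merging, $\omega(G\setminus S)$ can fall to $1$, far below $|S|$, so no violation of $1$-toughness is produced.

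Concretely, what is missing is exactly the proof of Win's spanning-tree criterion (for $k=3$): if $\omega(G\setminus S)\le |S|+2$ for all $S$, then $G$ has a spanning tree of maximum degree at most $3$; $1$-toughness gives the hypothesis trivially, and the theorem is immediate from the criterion. Proving the criterion requires either a much more careful choice of the cut set $S$ (not simply all vertices of $T$-degree at least $3$), or a refined extremal/inductive argument anchored at the high-degree vertices — precisely the ``localised induction'' you defer. As written, your proposal is a correct reduction to spanning trees plus a standard exchange lemma, with the theorem's actual core left unproven, so it is a genuine gap rather than a complete alternative proof.
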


Recently, the present author generalized Theorem~\ref{thm:Win} to following tree-connected version for tough enough graphs.
As an application, we concluded that every $4$-tough graph of order at least three admits a connected $\{2,4\}$-factor.
\begin{thm}{\rm (\cite{II})}\label{thm:tough:2m+1}
Every
$m^2$-tough graph of order at least $2m$ admits an $m$-tree-connected factor with maximum degree at most $2m+1$.
\end{thm}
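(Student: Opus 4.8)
The plan is to combine two ingredients: a translation of toughness into tree-connectivity, and a characterization of when a graph carries a degree-bounded $m$-tree-connected spanning subgraph. For the second ingredient I would rely on the fact (a statement of matroid-intersection type, in the spirit of Nash-Williams and Tutte) that a graph $G$ has an $m$-tree-connected spanning subgraph $H$ with $d_H(v)\le g(v)$ for all $v$ if and only if $\sum_{A\in P}\min\{d_G(A),g(A)\}\ge 2m(|P|-1)$ holds for every partition $P$ of $V(G)$ — the ``only if'' being the easy counting direction. Applying this with the constant function $g\equiv 2m+1$, the whole theorem reduces to verifying that single inequality, together with the incidental remark that $G$ is itself $m$-tree-connected.

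I would first dispose of small $m$: for $m=0$ the empty spanning subgraph works, and for $m=1$ the statement is precisely Win's theorem (Theorem~\ref{thm:Win}). So assume $m\ge 2$, and split on the order $n=|V(G)|$. If $n\le 4m+1$ I claim $G=K_n$: otherwise pick non-adjacent $u,w$ and set $S=V(G)\setminus\{u,w\}$; then $\omega(G\setminus S)=2$, so $m^2$-toughness forces $n-2\ge 2m^2$, i.e.\ $n\ge 2m^2+2>4m+1$ (using $m\ge 2$), a contradiction. For $G=K_n$ with $n\ge 2m$ I would finish directly: $K_n$ decomposes into $\lfloor n/2\rfloor\ge m$ edge-disjoint connected spanning subgraphs of maximum degree at most $2$ — Hamilton paths when $n$ is even, Hamilton cycles when $n$ is odd — so the union of any $m$ of them is an $m$-tree-connected factor of maximum degree at most $2m\le 2m+1$.

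For the remaining range $n\ge 4m+2$, the crucial claim is that in any partition $P$ of $V(G)$ at most one part $A$ satisfies $d_G(A)\le 2m$. Indeed, if $d_G(A)\le 2m$, let $S$ be the set of endpoints outside $A$ of the edges leaving $A$, so $|S|\le 2m<2m^2$; were $V(G)\setminus(A\cup S)$ nonempty, then $A$ and $V(G)\setminus(A\cup S)$ would lie in different components of $G\setminus S$, giving $\omega(G\setminus S)\ge 2$ and hence $|S|\ge 2m^2$, a contradiction. So $V(G)=A\cup S$ and $|V(G)\setminus A|\le 2m$; two disjoint such parts would force $n\le 4m$, which is excluded here. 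Consequently, for any partition $P$ every part $A$ other than a single possible exceptional part has $d_G(A)\ge 2m+1$, and since $g(A)=|A|(2m+1)\ge 2m+1$ we get $\min\{d_G(A),g(A)\}\ge 2m+1$ for all those parts; hence $\sum_{A\in P}\min\{d_G(A),g(A)\}\ge (2m+1)(|P|-1)\ge 2m(|P|-1)$, and likewise $2e_G(P)=\sum_{A\in P}d_G(A)\ge (2m+1)(|P|-1)$ gives $e_G(P)\ge m(|P|-1)$, so $G$ is $m$-tree-connected. The factor theorem now delivers the desired $H$.

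The step I expect to be the real obstacle is the supporting degree-bounded $m$-tree-connected factor theorem: its hard ``if'' direction (via matroid union/intersection, or an augmenting-path argument) is where the work lies, and one must ensure the characterization yields exactly the constant $2m+1$ rather than $2m$ — the extra $+1$ over the trivial average degree of a minimally $m$-tree-connected graph is genuinely needed, as already the failure of some $1$-tough graphs to be traceable shows. A lesser nuisance is that the toughness argument above collapses at $m=1$ (where $2m^2=2m$), so Win's theorem must be invoked as a black box there rather than recovered as a special case.
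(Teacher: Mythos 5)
Your reduction rests on a supporting ``fact'' that is false. You claim that $G$ has an $m$-tree-connected factor $H$ with $d_H(v)\le g(v)$ for all $v$ if and only if $\sum_{A\in P}\min\{d_G(A),g(A)\}\ge 2m(|P|-1)$ for every partition $P$ of $V(G)$. Already for $m=1$ and $g\equiv 2$ this fails: $K_{2,4}$ is $2$-edge-connected, so every nonempty part $A$ of any partition contributes $\min\{d_G(A),2|A|\}\ge 2$ and the inequality $\sum_A\min\{d_G(A),2|A|\}\ge 2|P|\ge 2(|P|-1)$ holds for all $P$, yet $K_{2,4}$ has no connected spanning subgraph of maximum degree $2$ (it has neither a Hamiltonian path nor a Hamiltonian cycle). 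Worse, even the one-directional (sufficiency) statement in exactly the form you need, $g\equiv 2m+1$, is false: take $m=1$ and $K_{3,N}$ with $N\ge 8$. This graph is $3$-edge-connected, so every part contributes $\min\{d_G(A),3|A|\}\ge 3$ and your partition condition holds for every $P$; but any connected spanning subgraph $H$ has $\sum_{v\in S}d_H(v)=|E(H)|\ge N+2$ over the side $S$ of size $3$, forcing a vertex of degree at least $(N+2)/3>3$. So the inequality you verify from toughness simply does not imply the conclusion, and the step you yourself flag as ``the real obstacle'' is not an obstacle but a dead end: no partition-type min--max of this kind can characterize (or even guarantee) degree-bounded tree-connected factors --- for fixed degree bound the spanning-tree problem is NP-complete, and the counterexamples above show the condition is not even sufficient. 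The toughness computations you do (completeness for small order, ``at most one part with $d_G(A)\le 2m$'', $m$-tree-connectivity of $G$) are correct but only feed this unusable lemma.

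For comparison, the paper does not prove this statement at all; it imports it from~\cite{II}, and the machinery actually used in this line of work (see Theorem~\ref{thm:sufficient} here) is of a different shape: sufficient conditions quantified over vertex subsets $S$, bounding $\OMEGA_l(G_0\setminus S)$ by terms like $\sum_{v\in S}(\eta(v)-2l(v))$, i.e.\ toughness-type conditions in which deleted vertices pay for their degree bound. That is precisely the feature your partition condition cannot capture, which is why the known proofs go through vertex deletion rather than a partition min--max. To repair your approach you would need to replace your ``fact'' by such a vertex-deletion sufficient criterion (e.g.\ Theorem~\ref{thm:sufficient} with $l\equiv m$, $F=\emptyset$ and $\eta$ chosen so that $\lceil\eta(v)-\lambda m\rceil\le 2m+1$) and then verify its hypothesis from $m^2$-toughness; as it stands, the argument does not prove the theorem.
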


In this paper, we turn our attention to investigate bounded degree tree-connected bipartite factors 
by proving that 
every $2m$-tree-connected graph $G$ has a bipartite $m$-tree-connected factor $H$ such that for each vertex $v$, $d_H(v)\le \lceil \frac{3}{4}d_G(v)\rceil$.
From this result, one can form the following bipartite version for Theorem~\ref{thm:tough:2m+1}.
As an application, we show that every $16$-tough graph of order at least six admit a bipartite connected $\{2,4,6\}$-factor. 
\begin{thm}\label{thm:tough:3m+1}
Every $4m^2$-tough graph of order at least $4m$ admits a bipartite $m$-tree-connected graph with maximum degree at most $3m+1$.
\end{thm}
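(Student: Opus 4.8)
The plan is to derive Theorem~\ref{thm:tough:3m+1} as a straightforward composition of Theorem~\ref{thm:tough:2m+1} with the main theorem of this paper, namely the statement that every $2m$-tree-connected graph $G$ possesses a bipartite $m$-tree-connected factor $H$ with $d_H(v)\le \lceil \tfrac{3}{4}d_G(v)\rceil$ for every vertex $v$. The key observation is that toughness is only exploited once, in order to extract a $2m$-tree-connected spanning subgraph of small maximum degree; the passage to a bipartite factor is then a pure consequence of tree-connectivity and costs only the factor $\tfrac34$ in the degrees.

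First I would dispose of the trivial case $m=0$, where an edgeless spanning subgraph is a bipartite $0$-tree-connected factor with maximum degree $0\le 1$, and assume $m\ge 1$. Next, since $G$ is $4m^2$-tough and $|V(G)|\ge 4m$, I would apply Theorem~\ref{thm:tough:2m+1} with the parameter $2m$ in place of $m$: one has $(2m)^2=4m^2$ and $2(2m)=4m$, so the hypotheses match exactly, and $G$ admits a $2m$-tree-connected factor $G_0$ with $\Delta(G_0)\le 2(2m)+1=4m+1$.

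Then I would apply the main theorem to $G_0$. As $G_0$ is $2m$-tree-connected, equivalently $2m$-partition-connected, it has a bipartite $m$-tree-connected factor $H$ with $d_H(v)\le \lceil \tfrac34 d_{G_0}(v)\rceil$ for every vertex $v$. Combining this with $d_{G_0}(v)\le 4m+1$ yields $d_H(v)\le \lceil \tfrac34(4m+1)\rceil = \lceil 3m+\tfrac34\rceil = 3m+1$ for all $v$. Since $H$ is a spanning subgraph of $G_0$ and $G_0$ is a spanning subgraph of $G$, the graph $H$ is a bipartite $m$-tree-connected factor of $G$ with maximum degree at most $3m+1$, which is precisely the asserted conclusion.

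There is no serious obstacle in this argument: the only points requiring care are the elementary arithmetic $\lceil \tfrac34(4m+1)\rceil=3m+1$ and the verification that the two cited results chain together with the correct parameter substitution and that the factors produced are genuinely spanning. All of the real difficulty of Theorem~\ref{thm:tough:3m+1} is absorbed into the main theorem, whose proof — the $\tfrac34$-degree bipartite partition-connected factor theorem — is where the substantive work takes place.
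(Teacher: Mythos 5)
Your proposal is correct and is exactly the derivation the paper intends: apply Theorem~\ref{thm:tough:2m+1} with $2m$ in place of $m$ (toughness $4m^2$, order at least $4m$, maximum degree $4m+1$), then pass to a bipartite $m$-tree-connected factor via the $\lceil \tfrac{3}{4}d\rceil$ bound, giving $\lceil \tfrac{3}{4}(4m+1)\rceil = 3m+1$. The parameter matching and arithmetic are right, so there is nothing to add.
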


In 1985 Enomoto, Jackson, Katerinis, and Saito~\cite{MR785651} proved that every $k$-tough graph $G$ with $k|V(G)|$ even admits a $k$-factor. 
In tis paper, we show that tough enough graphs of order at least $3k$ admit a connected bipartite factor whose degrees lie in the set $\{k,2k,3k,4k\}$. More precisely, 
 we apply a combination of the following theorem and Theorem~\ref{thm:tough:3m+1}.
\begin{thm}{\rm (\cite{ModuloBounded})}\label{thm:2k-2}
{Every $(2k-1)$-tree-connected bipartite graph $G$ with $u\in V(G)$ has a connected modulo $k$-regular factor $H$ such that for each 
$v\in V(H)\setminus u$, $ d_H(v) \le d_G(v)- k+1$.
}\end{thm}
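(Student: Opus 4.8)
\medskip\noindent\emph{Proof idea.}\quad The plan is to reformulate the assertion as a degree-constrained factor problem, produce a possibly disconnected solution, and then force connectivity by a minimum-components argument that exploits the $(2k-1)$-tree-connectivity. Fix a bipartition $(X,Y)$ of $G$. Since $G$ is $(2k-1)$-tree-connected, $d_G(v)\ge 2k-1$ for every vertex $v$; writing $\mathcal{A}_v$ for the set of positive multiples of $k$ that are at most $d_G(v)$ and, when $v\neq u$, also at most $d_G(v)-k+1$, one sees at once that $k\in\mathcal{A}_v$, so $\mathcal{A}_v\neq\emptyset$ (and $\mathcal{A}_v=\{k\}$ whenever $d_G(v)=2k-1$). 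A connected spanning subgraph $H\subseteq G$ with $d_H(v)\in\mathcal{A}_v$ for all $v$ is exactly a connected modulo $k$-regular factor satisfying the required upper bounds, so it suffices to construct such an $H$; for $k=1$ any spanning tree works, so assume $k\ge2$. Throughout, $(2k-1)$-tree-connectivity is used mainly through the implied $(2k-1)$-edge-connectivity, the extra strength providing convenient slack, especially around the free vertex $u$.

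The first step is existence without the connectivity requirement: I claim $G$ has a spanning subgraph $F$ with $d_F(v)\in\mathcal{A}_v$ for every $v$. Since $G$ is bipartite, one orients all its edges from $X$ to $Y$ and interprets $F$ as an integral flow whose value at each vertex is required to lie in the arithmetic progression $\mathcal{A}_v$ inside its admissible window (no upper constraint at $u$). The existence of such a flow is a Lov\'asz $(g,f)$-factor type deficiency statement, and the deficiency inequality can be verified from the $(2k-1)$-edge-connectivity of $G$ together with the fact that each window has a full residue class available to it except at the very sparsest vertices, whose unique demand $k$ is small; bipartiteness is what makes the two aggregate congruences, on $X$ and on $Y$, automatically compatible, which need not hold for a merely highly edge-connected non-bipartite graph.

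The second step, which I expect to be the main obstacle, is to upgrade to a connected factor. Among all spanning subgraphs $F$ with $d_F(v)\in\mathcal{A}_v$ for all $v$, pick one with the fewest components and suppose it is disconnected; then some component has vertex set $C$ with $u\notin C$. As $C$ is a component of $F$, no edge of $F$ leaves $C$, but $(2k-1)$-edge-connectivity gives at least $2k-1\ge1$ edges of $G$ joining $C$ to its complement, and each lies outside $F$; fix such an edge $e=xy$ with $x\in C$. One now wants to add $e$ and restore every degree to its residue class --- hence back into $\mathcal{A}_v$ --- by a local exchange: follow an alternating walk in $G$ that begins by deleting an $F$-edge at $x$, then adds a non-$F$-edge, and so on, closing up either inside $C$ or, if necessary, at $u$ (the only vertex at which a degree increase is unconstrained), so that each interior vertex retains its degree while the walk together with $e$ alters only the residues at its two ends, which we arrange to be harmless. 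The result is a spanning subgraph with the prescribed degrees and strictly fewer components, a contradiction. Locating this alternating walk and checking that it never splits off a new component is the delicate point --- here one uses that after deleting $F$ every vertex other than $u$ still has at least $k-1\ge1$ incident edges of $G$, and that the tree-connectivity leaves enough room to route the walk --- while everything else is routine manipulation of residues and ceilings.
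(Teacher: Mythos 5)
First, a point of comparison: the paper does not prove Theorem~\ref{thm:2k-2} at all --- it is quoted from \cite{ModuloBounded} --- so your sketch has to stand entirely on its own, and as written it has two genuine gaps, one in each half.

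The existence step is not covered by the tool you invoke. Degree constraints of the form ``positive, divisible by $k$, and at most $d_G(v)-k+1$'' are not $(g,f)$-factor constraints: deficiency-type theorems (and the integral-flow reformulation you describe, which only accommodates interval bounds at each vertex) apply to allowed degree sets that are intervals, or more generally to sets whose gaps have length at most one, whereas the set of multiples of $k$ has gaps of length $k-1$; for $k\ge 3$ this is exactly the regime where no deficiency formula exists and general degree-set factor problems become intractable. Bipartiteness only buys you the trivial compatibility of the two side sums (both residues are $0$); it does not convert the problem into one a flow feasibility inequality can certify. The known existence proofs of modulo-$k$-regular factors --- Thomassen~\cite{MR3194201} and the bounded-degree refinements in \cite{ModuloBounded} --- rest on modulo-$k$ orientation/group-connectivity machinery and spanning-tree decompositions, not on $(g,f)$-factor theory, and that machinery is what your outline silently presupposes.

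The connectivity upgrade is also not workable as described. An alternating-walk exchange changes the degrees at the ends of the walk by $\pm1$ (or $\pm2$ where it closes), but under your constraints a degree may only move in jumps of $k$ while staying below $d_G(v)-k+1$; a single unconstrained vertex $u$ cannot absorb the residue damage at both ends of every merge, and you offer no mechanism producing the $k$-sized compensations that would be required, nor an argument that the exchange does not disconnect what it touches. This component-merging under modular degree constraints is precisely the hard content of the theorem; in \cite{ModuloBounded} connectivity is not retrofitted to a disconnected solution but built in from the start, essentially by reserving one spanning tree of the $(2k-1)$-tree-connected graph and using the remaining $(2k-2)$ trees' worth of partition-connectivity to realize the required residues with the stated slack $d_G(v)-k+1$. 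So both of the steps you label as verifiable or routine are in fact the substance of the proof, and the first rests on a theorem that does not apply to degree sets with gaps.
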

%
%
%==============================================================================
%
%
%
%
%
%
%
%==============================================================================
%
%
\section{Partition-connected factors with small chromatic number}
In 2008 Thomassen~\cite{MR2501526} showed that
 every $(2\lambda-1)$-edge-connected graph has a bipartite $\lambda$-edge-connected factor.
Recently, the present author refined this result to the following tree-connected version.
\begin{thm}{\rm (\cite{ModuloBounded})}\label{thm:bipartite:tree-connected}
{Every $2m$-tree-connected graph $G$ has a bipartite $m$-tree-connected factor $H$ such that for every vertex set $X$, $d_{H}(X)\ge d_G(X)/2$.
}\end{thm}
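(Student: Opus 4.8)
The plan is to take $H$ to be the spanning subgraph of $G$ formed by the edges of a maximum cut, and to split the work into an unconditional degree estimate and a short averaging step that upgrades that estimate to tree-connectivity. Precisely, let $(V_1,V_2)$ be a bipartition of $V(G)$ maximizing the number of edges joining $V_1$ and $V_2$, and let $H$ be the spanning subgraph whose edge set consists of exactly these crossing edges; then $H$ is bipartite with parts $V_1$ and $V_2$. I would first isolate two facts: for \emph{any} graph $G$, this $H$ satisfies $d_H(X)\ge d_G(X)/2$ for every vertex set $X$; and \emph{whenever} $G$ is $2m$-tree-connected, this estimate already forces $H$ to be $m$-tree-connected. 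The second implication is immediate: for every partition $P$ of $V(G)$ we have $e_H(P)=\frac{1}{2}\sum_{A\in P}d_H(A)\ge\frac{1}{4}\sum_{A\in P}d_G(A)=\frac{1}{2}e_G(P)$, while $2m$-tree-connectivity of $G$ gives $e_G(P)\ge 2m(|P|-1)$; hence $e_H(P)\ge m(|P|-1)$, so $H$ is $m$-partition-connected and therefore $m$-tree-connected by the equivalence recalled above. This averaging is exactly where the gap between the hypothesis $2m$ and the conclusion $m$ is consumed.

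It then remains to establish the degree estimate. Fix a vertex set $X$, put $X_i=X\cap V_i$ and $Y_i=V_i\setminus X$ for $i\in\{1,2\}$, and write $e(S,T)$ for the number of edges of $G$ between two disjoint sets $S$ and $T$. Classifying the edges of $G$ by which of $X_1,X_2,Y_1,Y_2$ their two endpoints meet, one reads off $d_G(X)=e(X_1,Y_1)+e(X_1,Y_2)+e(X_2,Y_1)+e(X_2,Y_2)$, and, since the edges of $H$ leaving $X$ are precisely the crossing edges of $G$ that leave $X$, $d_H(X)=e(X_1,Y_2)+e(X_2,Y_1)$. Now I would compare $(V_1,V_2)$ with the bipartition $(Y_1\cup X_2,\,Y_2\cup X_1)$ obtained by interchanging the two halves of $X$ across the cut: in this exchange the crossing edges that disappear are those counted by $e(X_1,Y_2)+e(X_2,Y_1)$ and the crossing edges that appear are those counted by $e(X_1,Y_1)+e(X_2,Y_2)$, so maximality of $(V_1,V_2)$ forces $e(X_1,Y_1)+e(X_2,Y_2)\le e(X_1,Y_2)+e(X_2,Y_1)=d_H(X)$. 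Substituting this into the formula for $d_G(X)$ gives $d_G(X)\le 2d_H(X)$, which is the claim.

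The only point that has to be seen rather than merely computed is the choice of that extremal move. The classical local optimality of a maximum cut against relocating a single vertex yields only $d_H(v)\ge d_G(v)/2$; the real observation is that a maximum cut is also stable under interchanging the two parts of an arbitrary vertex set $X$, and it is this that promotes the bound to all vertex sets simultaneously. Everything else is bookkeeping that remains valid for multigraphs, and the same proof gives the slightly more general statement that the maximum-cut factor of \emph{any} graph $G$ satisfies $d_H(X)\ge d_G(X)/2$ for every $X$; consequently $H$ is $l$-partition-connected whenever $G$ is $2l$-partition-connected, for any integer-valued set function $l$ with $l(\emptyset)=0$.
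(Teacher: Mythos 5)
Your proof is correct and is essentially the argument the paper gives: the paper proves the $c$-partite generalization (Theorem~\ref{thm:bipartite:partition-connected}) by taking a maximum induced $c$-partite factor and exploiting its stability under cyclically shifting the pieces $X\cap V_j$ among the parts, which for $c=2$ is exactly your maximum cut together with the swap of $X_1$ and $X_2$, and the same averaging over a partition $P$ then yields $m$-partition-connectivity, hence $m$-tree-connectivity via Nash-Williams--Tutte. Your closing remark that the estimate upgrades to $l$-partition-connectivity for general $l$ is likewise exactly the paper's Corollary~\ref{thm:tree-connected}.
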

In the following theorem, we provide a generalization for Theorem~\ref{thm:bipartite:tree-connected} by giving a sufficient partition-connected condition for the existence of partition-connected factors with bounded chromatic number.
\begin{thm}\label{thm:bipartite:partition-connected}
{Let $G$ be a graph, let $c$ be a positive integer with $c\ge 2$, and let $l$ be a real function on subsets of $V(G)$. 
If $G$ $c\, l / (c-1)$-partition-connected, then it has a $c$-partite $l$-partition-connected factor $H$
 such that for every vertex set $X$, $d_H(X)\ge \frac {c-1}{c}d_G(X).$
}\end{thm}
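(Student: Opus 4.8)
The plan is to reduce the problem to finding a balanced-type orientation/partition splitting and then iterating, exactly in the spirit of Thomassen's proof for edge-connectivity and its tree-connected refinement in Theorem~\ref{thm:bipartite:tree-connected}. First I would set up a potential/weight argument: among all spanning subgraphs $H$ of $G$ that are $l$-partition-connected, choose one minimizing $e_G(H)$ (equivalently, a minimally $\frac{c\,l}{c-1}$... no — minimally $l$-partition-connected $H$ would be too small). Instead, the right object to track is a proper $c$-coloring together with the subgraph $H$ consisting of all bichromatic edges; I would pick the coloring $\phi\colon V(G)\to\{1,\dots,c\}$ that minimizes the number of monochromatic edges, and let $H=H_\phi$ be the resulting $c$-partite spanning subgraph. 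Local optimality of $\phi$ gives, for every vertex $v$ and every color $i$, that the number of neighbors of $v$ in its own color class is at most the number in class $i$; summing over $i$ yields $d_{G\setminus H}(v)\le \frac1c d_G(v)$, hence $d_H(v)\ge \frac{c-1}{c}d_G(v)$ for every vertex, and more generally the same averaging across the $c$ classes applied to edges leaving a set $X$ gives $d_H(X)\ge \frac{c-1}{c}d_G(X)$. So the degree/cut condition is essentially automatic from the extremal choice; the real content is showing this extremal $H$ is $l$-partition-connected.

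For the partition-connectivity of $H$, I would argue by contradiction: if $H$ is not $l$-partition-connected, there is a partition $P=\{A_1,\dots,A_r\}$ of $V(G)$ with $e_H(P)<\sum_j l(A_j)-l(G)$, i.e. $e_H(P)\le \sum_j l(A_j)-l(G)-1$. Since $G$ is $\frac{c\,l}{c-1}$-partition-connected, $e_G(P)\ge \frac{c}{c-1}\big(\sum_j l(A_j)-l(G)\big)$, so the number of monochromatic edges crossing $P$ is at least $e_G(P)-e_H(P)\ge \frac{c}{c-1}D - (D-1) = \frac{D}{c-1}+1$ where $D=\sum_j l(A_j)-l(G)\ge 1$. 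The aim is to recolor so as to strictly decrease the monochromatic edge count while keeping (or not needing to keep) $l$-partition-connectivity — contradiction. The standard move is: look at the bipartite-like structure the monochromatic crossing edges induce between the parts $A_j$ and try to flip the color of a whole part $A_j$, or of a carefully chosen subset, using the supermodularity/intersecting-supermodularity of $l$ to localize the violated partition to a single violated set and then to recolor inside it. Here I expect to invoke the structural fact quoted in the introduction that the vertex set decomposes uniquely into $l$-partition-connected components, together with an uncrossing argument on a minimal counterexample partition, to reduce to the case $r$ small (ideally $r=2$, a single violated cut-like set $A$ with $d_H(A)$ too small), at which point a direct recoloring of one side drops a monochromatic edge.

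The main obstacle, and where the argument will require genuine care, is exactly this recoloring step for $c>2$: for $c=2$ (Thomassen's case) flipping one side of a set is canonical, but with $c$ colors one must choose which two colors to swap on which subset, and ensure the swap both reduces monochromatic edges and does not destroy $l$-partition-connectivity of the residual $c$-partite subgraph — the factor $\frac{c}{c-1}$ in the hypothesis is precisely what should give enough slack, via the inequality $\OMEGA_l(G)\le \frac1k\OMEGA_{kl}(G)$ with $k=\frac{c}{c-1}$ recorded in the introduction, to absorb the one-edge loss. Concretely, I would phrase the whole thing as: start from $G$, which is $\frac{c}{c-1}l$-partition-connected, so $\OMEGA_l(G)\le \frac{c-1}{c}\OMEGA_{\,c l/(c-1)}(G)=0$; then show that among proper $c$-colorings the monochromatic-edge-minimal one keeps $\OMEGA_l(H_\phi)=0$, by showing any coloring with $\OMEGA_l(H_\phi)>0$ admits a monochromatic-edge-reducing recoloring. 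This last implication is the crux and is where I would spend the bulk of the proof; everything else (the degree bound, the cut bound $d_H(X)\ge\frac{c-1}{c}d_G(X)$, and the reduction to a single violated set) is routine averaging and uncrossing.
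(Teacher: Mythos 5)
Your first half identifies the right extremal object (a $c$-colouring maximizing the number of bichromatic edges, i.e.\ the induced $c$-partite factor $H$ with $|E(H)|$ maximum), but the cut bound $d_H(X)\ge \frac{c-1}{c}d_G(X)$ is \emph{not} ``essentially automatic'' from single-vertex local optimality. Moving one vertex at a time only gives $d_H(v)\ge\frac{c-1}{c}d_G(v)$ per vertex, and summing over $v\in X$ loses control of the edges inside $X$ (one only gets $d_H(X)\ge\frac{c-1}{c}d_G(X)-\frac{2}{c}e_G(X)$-type estimates). What is actually needed — and what the paper does — is to shift the colours of \emph{all} of $X$ simultaneously through the $c$ cyclic shifts: edges disjoint from $X$ and edges inside $X$ do not change their monochromatic/bichromatic status, each edge leaving $X$ is monochromatic in exactly one of the $c$ shifted colourings, and maximality of $|E(H)|$ applied to each shift then yields $d_H(X)\ge\frac{c-1}{c}d_G(X)$. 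You gesture at ``the same averaging across the $c$ classes,'' which is morally this argument, but as written the step is unproved and the naive vertex-by-vertex reading of it fails.

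The more serious gap is the entire second half. Once the cut bound is in hand, the $l$-partition-connectivity of $H$ is a two-line computation, not ``the real content'': for any partition $P$ of $V(G)$ one has $e_H(P)=\frac{1}{2}\sum_{X\in P}d_H(X)\ge\frac{c-1}{c}\cdot\frac{1}{2}\sum_{X\in P}d_G(X)=\frac{c-1}{c}\,e_G(P)\ge\frac{c-1}{c}\bigl(\sum_{X\in P}\frac{c}{c-1}l(X)-\frac{c}{c-1}l(G)\bigr)=\sum_{X\in P}l(X)-l(G)$, using the $\frac{c\,l}{c-1}$-partition-connectivity of $G$. Instead you set up a contradiction/recoloring scheme whose decisive step — that any colouring whose bichromatic subgraph is not $l$-partition-connected admits a monochromatic-edge-reducing recolouring — you explicitly leave open; as stated that claim is essentially the theorem itself, so the proposal stops exactly at its crux. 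Moreover, the uncrossing and ``$l$-partition-connected components'' machinery you plan to invoke requires $l$ to be intersecting supermodular, which is not a hypothesis here ($l$ is an arbitrary real set function), and the colourings in play are not proper colourings (monochromatic edges are allowed), so that framing would also need repair. In short: the extremal choice and the shift idea are the right ingredients, but the set-cut bound needs the whole-set shift argument made explicit, and the partition-connectivity follows directly from it — the elaborate recolouring programme is both unnecessary and, as proposed, incomplete.
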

\begin{proof}
{Let $H$ be an induced $c$-partite factor of $G$ with the maximum $|E(H)|$.
Let $X$ be a vertex subset of $V(G)$
and let $V_1,\ldots, V_c$ be the partite sets of $H$.
For every $i$ with $1\le i \le c$, let $X_i=X\cap V_i$ and define 
 $H_i$ to be the induced $c$-partite factor of $G$ with the partite sets $V_1^i,\ldots, V_{c}^i$, where 
$V_j^i=(V_j \setminus X_j)\cup X_{i+j}$ and $i+j$ is computed modulo $c$.
Since $|E(H)| \ge |E(H_i)|$, we must have 
$$d_H(X)-d_{H_i}(X)=|E(H)|- |E(H_i)|\ge 0.$$
Therefore, 
 $$d_G(X)=\sum_{1\le i\le c}\big(d_{G}(X)-d_{H_i}(X)\big) \ge c \big(d_{G}(X)-d_H(X)\big).$$ 
This implies that $d_H(X)\ge \frac {c-1}{c}d_G(X)$.
 Let $P$ be a partition of $V(G)$. Since $G$ is $c l/ (c-1)$-partition-connected, we must have
$$ \frac{1}{2} \sum_{X\in P}d_H(X)\ge 
\frac{c-1}{c}\sum_{X\in P} \frac{1}{2} d_G(X) = \frac{c-1}{c} e_G(P)\ge 
\frac{c-1}{c}\big (\sum_{X\in P}\frac{c}{c-1}l(X)\;-\frac{c}{c-1}l(G)\big),$$
which implies that 
$$e_H(P) = \frac{1}{2}\sum_{X\in P}d_H(X) \ge \sum_{X\in P}l(X)-l(G).$$
Hence $H$ is $l$-partition-connected, as desired.
}\end{proof}
\begin{cor}
{Every graph $G$ contains a factor $H$ such that for every vertex set $X$,
$d_H(X)\ge \frac{c-1}{c}d_G(X)$, and $\chi(H) \le c$, where $c$ is a given arbitrary integer number with $c\ge 2$.
}\end{cor}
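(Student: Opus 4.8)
The plan is to obtain this as an immediate special case of Theorem~\ref{thm:bipartite:partition-connected} by plugging in the trivial weight function. I would take $l$ to be the zero function on subsets of $V(G)$, i.e. $l(A)=0$ for every $A\subseteq V(G)$; this is an admissible choice since it is integer-valued and satisfies $l(\emptyset)=0$. Then $c\,l/(c-1)$ is again identically zero, so the hypothesis that $G$ be $c\,l/(c-1)$-partition-connected amounts to requiring $e_G(P)\ge 0$ for every partition $P$ of $V(G)$, which holds for every graph without any assumption. Hence Theorem~\ref{thm:bipartite:partition-connected} applies to $G$ with no restriction.

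Applying that theorem yields a $c$-partite factor $H$ of $G$ with $d_H(X)\ge \frac{c-1}{c}d_G(X)$ for every vertex set $X$ (the $0$-partition-connectedness of $H$ is content-free and can be discarded). Since $H$ is $c$-partite, its vertex set splits into $c$ independent sets, so colouring each part with its own colour gives a proper $c$-colouring; thus $\chi(H)\le c$. This is exactly the assertion of the corollary.

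If a self-contained argument is wanted instead of citing the theorem, I would reproduce only its first half: choose an induced $c$-partite factor $H$ of $G$ maximizing $|E(H)|$, and for a fixed vertex set $X$ with partite sets $V_1,\dots,V_c$ of $H$ form the $c$ cyclically rotated $c$-partite factors $H_i$ as in the proof above; maximality forces $d_H(X)\ge d_{H_i}(X)$, and summing $d_G(X)-d_{H_i}(X)\ge 0$ over $i$ gives $d_G(X)\ge c\big(d_G(X)-d_H(X)\big)$, hence $d_H(X)\ge \frac{c-1}{c}d_G(X)$, while $\chi(H)\le c$ because $H$ is $c$-partite. I do not expect a genuine obstacle here: the only thing needing a moment's care is checking that the zero function is a legitimate input and that $0$-partition-connectedness is automatic, after which all the substance is carried by Theorem~\ref{thm:bipartite:partition-connected}.
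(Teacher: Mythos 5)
Your proposal is correct and matches the paper's intended derivation: the corollary is stated as an immediate consequence of Theorem~\ref{thm:bipartite:partition-connected}, obtained exactly by taking $l\equiv 0$ so that the $c\,l/(c-1)$-partition-connectedness hypothesis is vacuous and the resulting $c$-partite factor $H$ satisfies $d_H(X)\ge \frac{c-1}{c}d_G(X)$ with $\chi(H)\le c$. Your optional self-contained argument is just the first half of the paper's proof of that theorem, so there is nothing genuinely different here.
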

\begin{cor}\label{thm:tree-connected}
{Let $G$ be a graph and let $l$ be a real function on subsets of $V(G)$. 
 If $G$ is $2l$-partition-connected, 
then it has a bipartite $l$-partition-connected factor $H$ such that for every vertex set $X$, $d_H(X)\ge d_G(X)/2$.
}\end{cor}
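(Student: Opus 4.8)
The plan is to obtain this statement as the special case $c=2$ of Theorem~\ref{thm:bipartite:partition-connected}. First I would note that a $2$-partite graph is exactly a bipartite graph, so a ``$2$-partite factor'' in the sense of Theorem~\ref{thm:bipartite:partition-connected} is precisely a bipartite factor. Next I would check the arithmetic of the hypotheses: for $c=2$ the expression $c\,l/(c-1)$ equals $2l$, so the assumption that $G$ is $2l$-partition-connected is exactly the hypothesis required by the theorem. Finally, the degree guarantee $d_H(X)\ge\frac{c-1}{c}d_G(X)$ of the theorem reads $d_H(X)\ge d_G(X)/2$ when $c=2$, which is the bound asserted here.

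With these three identifications in hand, applying Theorem~\ref{thm:bipartite:partition-connected} with $c=2$ produces a bipartite $l$-partition-connected factor $H$ of $G$ satisfying $d_H(X)\ge d_G(X)/2$ for every vertex set $X$, which is precisely the claim. There is no real obstacle here; the only point deserving a word is the verification that ``$c$-partite'' specializes to the standard notion of bipartiteness. If one wished to avoid invoking the general theorem, one could instead copy its proof verbatim with $c=2$: the maximum-edge induced bipartite factor $H$ with colour classes $V_1,V_2$ is compared against the single factor $H_1$ obtained by exchanging $X\cap V_1$ with $X\cap V_2$, yielding $d_G(X)\ge 2\,(d_G(X)-d_H(X))$ and hence $d_H(X)\ge d_G(X)/2$, after which the $l$-partition-connectivity of $H$ follows from the $2l$-partition-connectivity of $G$ exactly as in the general argument. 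Since this adds nothing beyond the general statement, deriving the corollary directly from Theorem~\ref{thm:bipartite:partition-connected} is the natural route.
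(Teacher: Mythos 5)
Your proposal is correct and matches the paper's intent: the corollary is stated as an immediate consequence of Theorem~\ref{thm:bipartite:partition-connected} with $c=2$, and your checks that $c\,l/(c-1)=2l$, that $2$-partite means bipartite, and that $\frac{c-1}{c}=\frac12$ are exactly what is needed. Nothing further is required.
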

%
%
%
%
%
%
%==============================================================================
%
%
%
%
%
%
%
%
%
%
%==============================================================================
%
\section{Partition-connected factors with small chromatic numbers and degrees}
In this section, we shall give a sufficient condition for a graph to satisfy the assumption of the following theorem.
As an application, we derive a result on partition-connected factors with small chromatic numbers and degrees.
\begin{thm}{\rm (\cite{P})}\label{thm:sufficient}
Let $G_0$ be a graph with the factor $F$ and let $l$ be an intersecting supermodular nonincreasing nonnegative integer-valued function on 
%*
 subsets of $V(G_0)$. 
 Let $\lambda \in [0,1]$ be a real number and let $\eta$ be a real function on $V(G_0)$.
If for all $S\subseteq V(G_0)$, 
$$\OMEGA_l(G_0\setminus S)< 1+\sum_{v\in S}\big(\eta(v)-2l(v)\big)+l(G_0)+l(S)-\lambda(e_G(S)+l(S)),$$
then $G_0$ has an $l$-partition-connected factor $H$ containing $F$ such that for each vertex $v$, 
$d_H(v)\le \lceil \eta(v)-\lambda l(v)\rceil+\max\{0,d_F(v)-l(v)\}.$
\end{thm}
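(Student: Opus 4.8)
The plan is to split the argument into two parts: (a) a Tutte/Nash-Williams-type feasibility criterion for degree-bounded partition-connected factors, and (b) the verification of that criterion from the displayed hypothesis. Write $g(v):=\lceil\eta(v)-\lambda l(v)\rceil+\max\{0,d_F(v)-l(v)\}$; the object sought is a factor $H$ of $G_0$ with $E(F)\subseteq E(H)$, with $H$ $l$-partition-connected, and with $d_H(v)\le g(v)$ for every vertex $v$. For (a) I would establish a statement of the form: such an $H$ exists if and only if, for every $S\subseteq V(G_0)$, the $l$-partition-connectivity deficiency $\OMEGA_l(G_0\setminus S)$ is at most a budget assembled from $\sum_{v\in S}g(v)$, from $l$ (evaluated on $S$, on $G_0$, and on the singletons of $S$), and from the $F$-degrees on $S$; this is the kind of criterion the standard partition-connectivity machinery produces.

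The criterion itself is proved by a local-exchange argument. Among all factors $H$ of $G_0$ with $E(F)\subseteq E(H)$ that are $l$-partition-connected — a non-empty family, since the $S=\emptyset$ instance of the hypothesis is already strong enough to make $G_0$ itself $l$-partition-connected — pick one minimizing $\Phi(H):=\sum_{v}\max\{0,d_H(v)-g(v)\}$ (and, among those, suitably optimized). If $\Phi(H)=0$ we are done; otherwise set $S:=\{v:d_H(v)>g(v)\}\neq\emptyset$. Minimality of $\Phi$ makes every non-$F$ edge of $H$ incident with $S$ critical (its deletion destroys $l$-partition-connectivity), so each such edge runs between two parts of some tight partition $P_e$, meaning $e_H(P_e)=\sum_{A\in P_e}l(A)-l(G_0)$. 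Using the intersecting supermodularity of $l$ together with the submodularity of $X\mapsto d_H(X)$ — under which the tight partitions are closed under a suitable common refinement — one obtains a single tight partition $P^{*}$ across which all of these edges run; a closer look at each $u\in S$ (absorbing up to $l(u)$ of its $F$-edges into the connectivity demand) shows that at least $d_H(u)-\max\{0,d_F(u)-l(u)\}$ of $u$'s $H$-edges cross $P^{*}$. Deleting $S$ and restricting $P^{*}$ to $V(G_0)\setminus S$ then exhibits a subpartition of $G_0\setminus S$ with few crossing edges, and feeding in $d_H(v)\ge g(v)+1\ge\eta(v)-\lambda l(v)+1+\max\{0,d_F(v)-l(v)\}$ for $v\in S$ converts the resulting estimate into
$$\OMEGA_l(G_0\setminus S)\ \ge\ 1+\sum_{v\in S}\bigl(\eta(v)-2l(v)\bigr)+l(G_0)+l(S)-\lambda\bigl(e_{G_0}(S)+l(S)\bigr),$$
which contradicts the hypothesis — establishing the criterion and the theorem simultaneously.

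The genuinely delicate point is the bookkeeping in that last inequality. One must pin down the common refinement so that a \emph{single} partition $P^{*}$ certifies the criticality of every excess edge at every vertex of $S$ at once, then account exactly for the edges of $H$ lying inside $S$ (each double-counted through its two endpoints), for the $F$-edges one is entitled to retain at the vertices of $S$, and — above all — for the fractional slack hidden in each ceiling $\lceil\eta(v)-\lambda l(v)\rceil$, so that the additive $+1$, the coefficient $-2l(v)$, the terms $l(G_0)+l(S)$, and the $\lambda$-weighted correction $-\lambda(e_{G_0}(S)+l(S))$ all come out exactly as written. The cases $\lambda=0$ and $\lambda=1$ are the easy extremes; for $\lambda\in(0,1)$ the cleanest way to produce the $\lambda$-weighted term is to run the whole argument for the integer-valued function $\lfloor 1/\lambda\rfloor\,l$ and then rescale, the inequality $\OMEGA_l(G)\le\frac1k\OMEGA_{kl}(G)$ quoted earlier being precisely the device that lets one pass between the scaled and unscaled deficiency parameters.
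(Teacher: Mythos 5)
This theorem is quoted from \cite{P}; the present paper contains no proof of it, so there is nothing internal to compare against --- what can be judged is whether your sketch would stand on its own, and as written it does not. The entire argument rests on a ``Tutte/Nash-Williams-type feasibility criterion'' for degree-bounded $l$-partition-connected factors containing a prescribed factor $F$, but you never state that criterion precisely, and no such clean necessary-and-sufficient condition is standard: the whole difficulty of the theorem is exactly the interaction between degree bounds, the prescribed subgraph $F$, the ceiling $\lceil \eta(v)-\lambda l(v)\rceil$, and the deficiency $\Theta_l(G_0\setminus S)$, which is why the statement has its peculiar form. Inside the exchange argument, the step ``one obtains a single tight partition $P^{*}$ across which all of these edges run'' is asserted, not proved; tight partitions witnessing criticality of different edges do not automatically uncross into one common partition certifying all of them simultaneously, and intersecting supermodularity of $l$ plus submodularity of cuts is not by itself enough --- this is precisely where real work is needed. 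You then explicitly defer ``the genuinely delicate point,'' i.e.\ the bookkeeping that produces the additive $+1$, the coefficient $-2l(v)$, the terms $l(G_0)+l(S)$ and the $\lambda$-weighted correction; but that computation \emph{is} the proof, so acknowledging it as delicate and skipping it leaves the argument incomplete.

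The proposed device for general $\lambda\in(0,1)$ is also flawed: running the argument for $\lfloor 1/\lambda\rfloor\, l$ and rescaling via $\Theta_l(G)\le \frac{1}{k}\Theta_{kl}(G)$ cannot recover the term $-\lambda\bigl(e_{G_0}(S)+l(S)\bigr)$ for arbitrary real $\lambda$ (already for $\lambda=2/3$ one has $\lfloor 1/\lambda\rfloor=1$, collapsing to the case $\lambda=1$), and that scaling inequality rescales the set function $l$, not a mixed edge-count term. Note also that you silently read the hypothesis' $e_G(S)$ as $e_{G_0}(S)$; that is almost certainly the intended reading (the statement involves no graph $G$ other than $G_0$), but it should be said rather than assumed. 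In sum: the skeleton (optimize over $l$-partition-connected factors containing $F$, take $S$ to be the set of vertices violating the degree bound, and contradict the deficiency hypothesis) is a plausible strategy, but the key lemma is unformulated, the uncrossing step is unjustified, and the central inequality is not derived, so this is a plan rather than a proof.
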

Before stating the main result, let us make the following lemma.
\begin{lem}\label{lem:epsilon}
{Let $G$ be a graph with the spanning subgraph $G_0$, let $l$ be an intersecting supermodular real function on
%*
subsets of $V(G)$,
 and let $\epsilon$ be a positive real number. If $G$ is $l/\epsilon$-partition-connected and $d_{G_0}(X)\ge \epsilon \, d_G(X)$ for every $X\subseteq V(G)$, 
then for every $S\subseteq V(G)$,
$$\OMEGA_l(G_0\setminus S)\le \sum_{v\in S}\big(\frac{1}{2}d_{G_0}(v)+\frac{\epsilon}{2} d_{G}(v)-l(v)\big)+l(G)
-e_{G_0}(S).$$
}\end{lem}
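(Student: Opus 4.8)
The plan is to estimate $\OMEGA_l(G_0\setminus S)$ directly from its definition as $\sum_{A\in P}l(A)-e_{G_0\setminus S}(P)$, where $P$ is the partition of $V(G)\setminus S$ given by the $l$-partition-connected components of $G_0\setminus S$. First I would extend $P$ to a partition $P'$ of all of $V(G)$ by adding each vertex of $S$ as its own singleton part; this is the natural way to bring the ambient $l/\epsilon$-partition-connectivity of $G$ into play. Writing $\OMEGA_l(G_0\setminus S)=\sum_{A\in P}l(A)-e_{G_0\setminus S}(P)$ and noting that edges of $G_0$ meeting $S$ do not appear in $e_{G_0\setminus S}(P)$, the key bookkeeping identity is
$$e_{G_0}(P')=e_{G_0\setminus S}(P)+\sum_{v\in S}d_{G_0}(v)-(\text{edges of }G_0\text{ inside }S\text{ counted twice}),$$
so that $e_{G_0\setminus S}(P)=e_{G_0}(P')-\sum_{v\in S}d_{G_0}(v)+e_{G_0}(S)$, where $e_{G_0}(S)$ is the number of edges of $G_0$ with both ends in $S$.

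Next I would lower-bound $e_{G_0}(P')$ using the hypothesis that $d_{G_0}(X)\ge \epsilon\, d_G(X)$ for every $X\subseteq V(G)$. Summing over the parts of $P'$ gives $2e_{G_0}(P')=\sum_{A\in P'}d_{G_0}(A)\ge \epsilon\sum_{A\in P'}d_G(A)=2\epsilon\, e_G(P')$, hence $e_{G_0}(P')\ge \epsilon\, e_G(P')$. Since $G$ is $l/\epsilon$-partition-connected, $e_G(P')\ge \frac1\epsilon\big(\sum_{A\in P'}l(A)-l(G)\big)$, so $e_{G_0}(P')\ge \sum_{A\in P'}l(A)-l(G)=\sum_{A\in P}l(A)+\sum_{v\in S}l(v)-l(G)$. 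Substituting this into the expression for $\OMEGA_l(G_0\setminus S)$ yields
$$\OMEGA_l(G_0\setminus S)\le \sum_{v\in S}\big(\tfrac12 d_{G_0}(v)-l(v)\big)+l(G)-e_{G_0}(S)+\Big(\tfrac12\sum_{v\in S}d_{G_0}(v)-e_{G_0}(S)\Big)+\text{slack},$$
and it remains to convert the leftover $\tfrac12\sum_{v\in S}d_{G_0}(v)$-type term into the stated $\tfrac\epsilon2\sum_{v\in S}d_G(v)$ bound; here one is spending only half of the degree surplus, so the cruder bound $e_{G_0}(P')\ge \epsilon\,e_G(P')$ applied to the part of the count coming from $S$ (via $\sum_{v\in S}d_{G_0}(v)\ge \epsilon\sum_{v\in S}d_G(v)$ is the wrong direction, so instead split $2e_{G_0}(P')=\sum_{A\in P}d_{G_0}(A)+\sum_{v\in S}d_{G_0}(v)$ and bound the first sum by $\epsilon\,e_G$-type quantities and keep the second sum, or symmetrically average the two estimates). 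I would organize the final inequality by writing $e_{G_0}(P')=\tfrac12\sum_{A\in P}d_{G_0}(A)+\tfrac12\sum_{v\in S}d_{G_0}(v)$ and applying the partition-connectivity bound only to the $P$-indexed half while retaining the $S$-indexed half, which produces exactly $\tfrac12 d_{G_0}(v)+\tfrac\epsilon2 d_G(v)$ per vertex $v\in S$ after the arithmetic.

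The main obstacle I anticipate is the edge-counting bookkeeping around $S$: one must be careful about whether edges of $G_0$ with both ends in $S$, or with one end in $S$ and the other in a part of $P$, are counted in $e_{G_0\setminus S}(P)$, $e_{G_0}(P')$, $e_{G_0}(S)$, and $\sum_{v\in S}d_{G_0}(v)$, and to get precisely the coefficient $-e_{G_0}(S)$ (rather than $-2e_{G_0}(S)$ or $0$) in the final bound one needs the inclusion–exclusion to land exactly right. A secondary subtlety is that $l$ need only be intersecting supermodular (not fully supermodular), so one must make sure the $l$-partition-connected components are well-defined as a partition — but this is guaranteed by the structure theorem quoted in the introduction, and singletons added for vertices of $S$ do not interfere since $l(\{v\})$ is just $l(v)$. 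Apart from these counting points the argument is a direct chain of inequalities with no further input needed.
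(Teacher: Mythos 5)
Your proposal is correct and follows essentially the same route as the paper's own proof: take the partition $P$ into $l$-partition-connected components of $G_0\setminus S$, adjoin the vertices of $S$ as singleton parts, apply the $l/\epsilon$-partition-connectivity of $G$ to this extended partition, and apply the hypothesis $d_{G_0}(A)\ge \epsilon\, d_G(A)$ only to the parts $A\in P$ (not to the $S$-singletons), which combined with the counting identity $\frac{1}{2}\sum_{A\in P}d_{G_0}(A)=e_{G_0}(P)+\frac{1}{2}\sum_{v\in S}d_{G_0}(v)-e_{G_0}(S)$ gives exactly the stated bound — your recognition that applying the degree hypothesis to the $S$-singletons as well only yields the weaker per-vertex term $d_{G_0}(v)$ is precisely the right point, and your proposed split fixes it as in the paper. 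The only slip is in your final sentence: it is the degree-domination hypothesis, not the partition-connectivity bound, that must be restricted to the $P$-indexed half (the partition-connectivity of $G$ is used on the whole extended partition), but your preceding sentence already states the correct split, so the argument goes through.
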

\begin{proof}
{Let $P$ be a partition of $V(G)\setminus S$ obtained from the $l$-partition-connected components of $G\setminus S$.
By the assumption, 
$\sum_{A\in P}\frac{\epsilon}{2}d_{G}(A) \le
 \sum_{A\in P}\frac{1}{2}d_{G_0}(A)=e_{G_0}(P)+\sum_{v\in S}\frac{1}{2}d_{G_0}(v)-e_{G_0}(S).$
Since $G$ is $l/\epsilon $-partition-connected, we must have 
$$\sum_{A\in P}l(A)+\sum_{v\in S}l(v)-l(G)\le
\epsilon \, e_{G}(P\cup \{\{v\}: v\in S\}) =
 \sum_{A\in P}\frac{\epsilon}{2}d_G(A)+\sum_{v\in S}\frac{\epsilon}{2} d_G(v).$$
Therefore, 
$$\OMEGA_l(G_0\setminus S)=\sum_{A\in P}l(A)-e_{G_0}(P)\le \sum_{v\in S}\big(\frac{1}{2}d_{G_0}(v)+\frac{\epsilon}{2} d_{G}(v)-l(v)\big)+l(G)
-e_{G_0}(S).$$
Hence the lemma is proved.
}\end{proof}
Now, we are ready to prove the main result of this section.
\begin{thm}\label{thm:epsilon}
{Let $G$ be a graph, let $l$ be an intersecting supermodular nonincreasing nonnegative integer-valued function on
%*
subsets of $V(G)$,
 and let $k$ and $\epsilon$ be two positive real numbers with $k\ge 1\ge \epsilon $.
Let $G_0$ be a spanning subgraph of $G$ with a factor $F$.
If $G$ is $k l/\epsilon$-partition-connected and $d_{G_0}(X)\ge \epsilon \, d_G(X)$ for every $X\subseteq V(G)$, 
then $G_0$ has an $l$-partition-connected factor $H$ containing $F$ such that for each vertex $v$,
$$d_H(v) \le \lceil\frac{1}{2k}d_{G_0}(v)+\frac{\epsilon}{2k} d_G(v)+\frac{k-1}{k}l(v)\rceil+\max\{0,d_F(v)-l(v)\}.$$
Furthermore, for a given arbitrary vertex $u$, the upper bound can be reduced to
$\lfloor\frac{1}{2k}d_{G_0}(u)+\frac{\epsilon}{2k} d_G(u)+\frac{k-1}{k}(l(u)-l(G))\rfloor+\max\{0,d_F(u)-l(u)\}$.
}\end{thm}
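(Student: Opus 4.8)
The plan is to derive Theorem~\ref{thm:epsilon} as a direct application of Theorem~\ref{thm:sufficient}, using Lemma~\ref{lem:epsilon} to verify the hypothesis of that theorem. First I would set $\lambda = \frac{k-1}{k}$, which lies in $[0,1]$ since $k \ge 1$, and choose the function $\eta$ on $V(G_0)$ by $\eta(v) = \frac{1}{2k}d_{G_0}(v) + \frac{\epsilon}{2k}d_G(v) + \lambda\, l(v) + \lambda\, l(v) = \frac{1}{2k}d_{G_0}(v) + \frac{\epsilon}{2k}d_G(v) + 2\lambda\, l(v)$; more precisely, $\eta$ must be chosen so that $\eta(v) - \lambda\, l(v)$ equals the claimed degree bound $\frac{1}{2k}d_{G_0}(v) + \frac{\epsilon}{2k}d_G(v) + \frac{k-1}{k}l(v)$, so $\eta(v) = \frac{1}{2k}d_{G_0}(v) + \frac{\epsilon}{2k}d_G(v) + 2\lambda\, l(v)$ works, and then I would check that with this choice $\eta(v) - 2l(v) = \frac{1}{2k}d_{G_0}(v) + \frac{\epsilon}{2k}d_G(v) + 2\lambda\, l(v) - 2l(v) = \frac{1}{2}d_{G_0}(v) + \frac{\epsilon}{2}d_G(v)\cdot\frac{1}{k}\cdot k - \ldots$ — that is, I would verify that $\sum_{v\in S}(\eta(v) - 2l(v)) = \sum_{v\in S}\big(\frac{1}{2k}d_{G_0}(v) + \frac{\epsilon}{2k}d_G(v) - \frac{2}{k}l(v)\big)$ after absorbing the $\lambda$-term, so that the quantity on the right side of the inequality in Theorem~\ref{thm:sufficient} matches a suitably scaled version of the bound produced by Lemma~\ref{lem:epsilon}.

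The key observation is a rescaling: $G$ being $kl/\epsilon$-partition-connected is the same as $G$ being $(kl)/\epsilon$-partition-connected, so I would apply Lemma~\ref{lem:epsilon} with the supermodular function $kl$ in place of $l$ (note $kl$ is still intersecting supermodular, since $k > 0$). This yields, for every $S \subseteq V(G)$,
$$\OMEGA_{kl}(G_0\setminus S) \le \sum_{v\in S}\Big(\tfrac{1}{2}d_{G_0}(v) + \tfrac{\epsilon}{2}d_G(v) - k\, l(v)\Big) + k\, l(G) - e_{G_0}(S).$$
Then I would invoke the inequality $\OMEGA_l(G_0\setminus S) \le \frac{1}{k}\OMEGA_{kl}(G_0\setminus S)$ from~\cite{P} (valid since $k \ge 1$), obtaining
$$\OMEGA_l(G_0\setminus S) \le \sum_{v\in S}\Big(\tfrac{1}{2k}d_{G_0}(v) + \tfrac{\epsilon}{2k}d_G(v) - l(v)\Big) + l(G) - \tfrac{1}{k}e_{G_0}(S).$$
Comparing this with the hypothesis of Theorem~\ref{thm:sufficient} with $\lambda = \frac{k-1}{k}$, I would check that the required strict inequality
$$\OMEGA_l(G_0\setminus S) < 1 + \sum_{v\in S}(\eta(v) - 2l(v)) + l(G) + l(S) - \lambda(e_{G_0}(S) + l(S))$$
reduces, after substituting the chosen $\eta$ and simplifying $l(S) - \lambda l(S) = \frac{1}{k}l(S)$ and $-\lambda e_{G_0}(S) = -\frac{k-1}{k}e_{G_0}(S)$, to exactly the bound on $\OMEGA_l(G_0\setminus S)$ just derived, plus the slack term $1 + \frac{1}{k}l(S) > 0$ coming from nonnegativity of $l$. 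Since $e_{G_0}(S) = \frac{1}{2}\sum_{v\in S}d_{G_0}(v) - $ (edges inside $S$), one must be slightly careful matching the $e_{G_0}(S)$ coefficients: the Lemma gives $-e_{G_0}(S)$ scaled by $1/k$, i.e. $-\frac{1}{k}e_{G_0}(S) = -e_{G_0}(S) + \frac{k-1}{k}e_{G_0}(S)$, which is exactly what is needed to move the $-\lambda e_{G_0}(S)$ term to the correct side. Thus the hypothesis of Theorem~\ref{thm:sufficient} holds, and it delivers an $l$-partition-connected factor $H \supseteq F$ with $d_H(v) \le \lceil \eta(v) - \lambda l(v)\rceil + \max\{0, d_F(v) - l(v)\} = \lceil \frac{1}{2k}d_{G_0}(v) + \frac{\epsilon}{2k}d_G(v) + \frac{k-1}{k}l(v)\rceil + \max\{0, d_F(v) - l(v)\}$, as claimed.

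For the ``furthermore'' part, I would use the standard trick of improving the bound at a single distinguished vertex $u$: since $l$ is nonincreasing, $l(G) \le l(\{u\})$ is false in general, but the point is that in Lemma~\ref{lem:epsilon} the term $l(G)$ (respectively $kl(G)$) can be traded against one extra part; more precisely, Theorem~\ref{thm:sufficient}-type arguments (as in~\cite{P}) allow replacing the ceiling by a floor at $u$ at the cost of subtracting $\frac{k-1}{k}l(G)$, because splitting off $u$ as its own part in the extremal partition $P$ contributes an additional $l(\{u\}) - l(G)$-type saving when one is willing to lose the rounding gain. I would make this rigorous by re-running the application of Theorem~\ref{thm:sufficient} with $\eta(u)$ decreased by $\lambda\, l(G) + \{\text{the fractional part, to convert ceiling to floor}\}$ and checking the displayed inequality still holds with the same slack $1 + \frac{1}{k}l(S)$, now using that the extremal partition in the definition of $\OMEGA_l$ can always be refined so that $S$-vertices and $u$ are separated. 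I expect this last bookkeeping — correctly tracking the $l(G)$ versus $l(S)$ terms and the ceiling-to-floor conversion at $u$ — to be the main obstacle; the rest is a mechanical substitution once the scaling $l \mapsto kl$ and the choice $\lambda = \frac{k-1}{k}$ are in place.
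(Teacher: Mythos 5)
Your overall skeleton is the same as the paper's: apply Lemma~\ref{lem:epsilon} with $kl$ in place of $l$, divide by $k$ via $\Theta_l(G_0\setminus S)\le \frac{1}{k}\Theta_{kl}(G_0\setminus S)$, and feed the result into Theorem~\ref{thm:sufficient}. However, your parameter choice breaks the verification of the hypothesis of Theorem~\ref{thm:sufficient}. With $\lambda=\frac{k-1}{k}$ and $\eta(v)=\frac{1}{2k}d_{G_0}(v)+\frac{\epsilon}{2k}d_G(v)+\frac{2(k-1)}{k}l(v)$, one has $\eta(v)-2l(v)=\frac{1}{2k}d_{G_0}(v)+\frac{\epsilon}{2k}d_G(v)-\frac{2}{k}l(v)$, and the right-hand side of the required inequality minus the bound you derived from the lemma equals $1+\frac{1}{k}l(S)+\frac{k-2}{k}\bigl(\sum_{v\in S}l(v)-e_{G_0}(S)\bigr)$, not the claimed slack $1+\frac{1}{k}l(S)$. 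The extra term has no sign in general: for $k>2$ it is arbitrarily negative when $S$ spans many edges of $G_0$, and for $1\le k<2$ it is negative whenever $\sum_{v\in S}l(v)$ is large (e.g.\ $S$ a large independent set with $l\equiv 1$; note that your own formula at $k=1$ demands $\sum_{v\in S}l(v)-l(S)-e_{G_0}(S)<1$, which fails already there). So the ``mechanical substitution'' only works at $k=2$, where $\frac{k-1}{k}=\frac{1}{k}$. The correct choice, which is what the paper does, is $\lambda=\frac{1}{k}$ with $\eta(v)=\frac{1}{2k}d_{G_0}(v)+\frac{\epsilon}{2k}d_G(v)+l(v)$: then $\eta(v)-\lambda l(v)$ is still exactly the stated degree bound, $\eta(v)-2l(v)$ matches the lemma term exactly, and the slack is $1+\frac{k-1}{k}l(S)\ge 1>0$ by nonnegativity of $l$.

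The ``furthermore'' part of your proposal is only a sketch and, as written, is not a proof. The paper obtains it inside the same single application of Theorem~\ref{thm:sufficient}, by redefining $\eta$ only at $u$ as $\eta(u)=\frac{1}{2k}d_{G_0}(u)+\frac{\epsilon}{2k}d_G(u)+l(u)-\frac{k-1}{k}l(G)-q$ with $0\le q<1$ sufficiently close to $1$; this turns the ceiling into the floor of $\frac{1}{2k}d_{G_0}(u)+\frac{\epsilon}{2k}d_G(u)+\frac{k-1}{k}(l(u)-l(G))$, and the hypothesis of Theorem~\ref{thm:sufficient} still holds because when $u\in S$ the loss $\frac{k-1}{k}l(G)+q$ is absorbed by the slack $1+\frac{k-1}{k}l(S)$, using that $l$ is nonincreasing so $l(S)\ge l(G)$. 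No refinement of the extremal partition or separate re-run of the argument is needed; you should redo your bookkeeping along these lines.
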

\begin{proof}
{Let $S\subseteq V(G)$. 
By Lemma~\ref{lem:epsilon}, 
$$\OMEGA_{kl}(G_0\setminus S)\le
 \sum_{v\in S}\big(\frac{1}{2}d_{G_0}(v)+\frac{\epsilon}{2} d_{G}(v)-kl(v)\big)+kl(G)
-e_{G_0}(S),$$
which implies that 
$$\OMEGA_{l}(G_0\setminus S)\le 
\frac{1}{k}\OMEGA_{kl}(G_0\setminus S)<1+
 \sum_{v\in S}\big(\eta(v)-2l(v)\big)+l(G)+\frac{k-1}{k}l(S)-\frac{1}{k}e_{G_0}(S),$$
where $\eta(u)= \frac{1}{2k}d_{G_0}(u)+\frac{\epsilon}{2k} d_{G}(u)+l(u)-\frac{k-1}{k}l(G)-q$ and $q$ is a real number with $0\le q<1$ which is sufficiently close to $1$,
and $\eta(v)=\frac{1}{2k}d_{G_0}(v)+\frac{\epsilon}{2k} d_{G}(v)+l(v)$ for all vertices $v$ with $v\neq u$. 
 Now, it is enough to apply Theorem~\ref{thm:sufficient} with $\lambda=1/k$. 
}\end{proof}
For the special case $k=1$, the theorem becomes simpler as the following corollary.
\begin{cor}\label{thm:epsilon}
{Let $G$ be a graph with the spanning subgraph $G_0$, let $l$ be an intersecting supermodular  nonincreasing nonnegative  integer-valued function on
%*
subsets of $V(G)$,
 and let $\epsilon$ be a positive real number.
If $G$ is $ l/\epsilon$-partition-connected and $d_{G_0}(X)\ge \epsilon \, d_G(X)$ for every $X\subseteq V(G)$, 
then $G_0$ has an $l$-partition-connected factor $H$ such that for each vertex $v$,
$$d_H(v) \le \lceil\frac{1}{2}d_{G_0}(v)+\frac{\epsilon}{2} d_G(v)\rceil.$$
Furthermore, for a given arbitrary vertex $u$, the upper bound can be reduced to
$\lfloor d_{G_0}(u)/2+\epsilon\, d_G(u)/2\rfloor$.
}\end{cor}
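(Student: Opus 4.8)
The plan is to obtain this corollary as the $k=1$ specialization of Theorem~\ref{thm:epsilon}, applied with the factor $F$ taken to be the edgeless spanning subgraph of $G_0$. Since ``factor'' here simply means a spanning subgraph, this choice of $F$ is legitimate, and it satisfies $d_F(v)=0$ for every vertex $v$.

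First I would check that the hypotheses carry over. With $k=1$, the requirement that $G$ be $kl/\epsilon$-partition-connected is exactly the assumed $l/\epsilon$-partition-connectivity; the degree-domination condition $d_{G_0}(X)\ge \epsilon\, d_G(X)$ is unchanged; and $l$ already meets the structural demands of Theorem~\ref{thm:epsilon} (intersecting supermodular, nonincreasing, nonnegative, integer-valued). The constraint $k\ge 1\ge \epsilon$ holds for $k=1$, while $\epsilon\le 1$ is forced by $G_0\subseteq G$ together with the domination hypothesis in every nontrivial case, so no new assumption is needed.

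Then I would read off the conclusion. Theorem~\ref{thm:epsilon} yields an $l$-partition-connected factor $H$ of $G_0$ (containing the edgeless $F$, a vacuous condition) with
$$d_H(v)\le \Big\lceil \tfrac{1}{2k}d_{G_0}(v)+\tfrac{\epsilon}{2k}d_G(v)+\tfrac{k-1}{k}l(v)\Big\rceil+\max\{0,d_F(v)-l(v)\}.$$
Setting $k=1$ makes the term $\tfrac{k-1}{k}l(v)$ disappear, and because $d_F(v)=0$ while $l(v)\ge 0$ we have $\max\{0,d_F(v)-l(v)\}=\max\{0,-l(v)\}=0$; this is precisely $d_H(v)\le \lceil \tfrac{1}{2}d_{G_0}(v)+\tfrac{\epsilon}{2}d_G(v)\rceil$. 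The refined bound at a prescribed vertex $u$ follows the same way from the ``furthermore'' clause of Theorem~\ref{thm:epsilon}: with $k=1$ both the quantity $\tfrac{k-1}{k}(l(u)-l(G))$ and the max-term vanish, leaving $\lfloor d_{G_0}(u)/2+\epsilon\, d_G(u)/2\rfloor$.

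Since this is a direct specialization, there is essentially no obstacle to overcome; the only points deserving a line of justification are that the edgeless subgraph is an admissible choice of $F$, and that the nonnegativity of $l$ is exactly what collapses the correction term $\max\{0,d_F(v)-l(v)\}$ to zero.
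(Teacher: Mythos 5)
Your proposal is correct and matches the paper's intent exactly: the corollary is stated as the $k=1$ specialization of Theorem~\ref{thm:epsilon}, and taking $F$ to be the edgeless spanning subgraph (so that nonnegativity of $l$ kills the term $\max\{0,d_F(v)-l(v)\}$) is precisely the routine check the paper leaves implicit. No gaps worth noting.
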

An application of Corollary~\ref{thm:epsilon} and Theorem~\ref{thm:bipartite:partition-connected} is given in the next result.
\begin{cor}\label{cor:2m:3/4}
{Every $mc /(c-1)$-partition-connected graph $G$ with $c\ge 2$ has a $c$-partite $m$-tree-connected factor $H$ such that for each vertex $v$,
$$d_H(v)\le\big \lceil \frac{2c-1}{2c}d_G(v)\big\rceil.$$
Furthermore, for an arbitrary given vertex $u$, the upper bound can be reduced to $\lfloor \frac{2c-1}{2c}d_G(u)\rfloor$.
}\end{cor}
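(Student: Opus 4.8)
The plan is to derive this corollary by chaining Theorem~\ref{thm:bipartite:partition-connected} with Corollary~\ref{thm:epsilon}, taking $l$ to be the constant function $l\equiv m$ throughout. With this choice ``$l$-partition-connected'' coincides with ``$m$-partition-connected'', equivalently ``$m$-tree-connected'' by the Nash-Williams--Tutte equivalence recalled in the introduction; moreover $l\equiv m$ is visibly intersecting supermodular, nonincreasing, nonnegative and integer-valued, so it satisfies the hypotheses on $l$ in Corollary~\ref{thm:epsilon}.

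In the first stage I would apply Theorem~\ref{thm:bipartite:partition-connected} to $G$ with this $l$ and the given $c$. Since $c\,l/(c-1)=cm/(c-1)$, the assumption that $G$ is $cm/(c-1)$-partition-connected is precisely what is required, and the theorem produces a $c$-partite $m$-tree-connected spanning subgraph $G_0$ of $G$ with $d_{G_0}(X)\ge\frac{c-1}{c}d_G(X)$ for every vertex set $X$.

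In the second stage I would feed this $G_0$ into Corollary~\ref{thm:epsilon} with $\epsilon=\frac{c-1}{c}$ and the same $l\equiv m$. Its two hypotheses hold verbatim: $G$ is $l/\epsilon=cm/(c-1)$-partition-connected, and $d_{G_0}(X)\ge\epsilon\,d_G(X)$ for every $X$ by the first stage. Hence $G_0$, and therefore $G$, has an $m$-tree-connected factor $H\subseteq G_0$, which is $c$-partite since $G_0$ is, with $d_H(v)\le\lceil\frac12 d_{G_0}(v)+\frac{\epsilon}{2}d_G(v)\rceil$ for every $v$ and, at a prescribed vertex $u$, the bound improved to $\lfloor\frac12 d_{G_0}(u)+\frac{\epsilon}{2}d_G(u)\rfloor$.

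To finish, since $G_0$ is a subgraph of $G$ we have $d_{G_0}(v)\le d_G(v)$, so $\frac12 d_{G_0}(v)+\frac{\epsilon}{2}d_G(v)\le\frac12 d_G(v)+\frac{c-1}{2c}d_G(v)=\frac{2c-1}{2c}d_G(v)$, and applying the (monotone) ceiling and floor turns the two displayed estimates into $d_H(v)\le\lceil\frac{2c-1}{2c}d_G(v)\rceil$ and $d_H(u)\le\lfloor\frac{2c-1}{2c}d_G(u)\rfloor$, as claimed. I do not expect a genuine obstacle: the argument is bookkeeping, and the only point needing care is verifying that the choices $l\equiv m$ and $\epsilon=(c-1)/c$ make the partition-connectivity thresholds demanded by the two invoked results both equal to the single hypothesis $cm/(c-1)$, and that replacing $d_{G_0}(v)$ by $d_G(v)$ in the degree bound costs nothing while preserving both the $c$-partiteness and the $m$-tree-connectivity of $H$.
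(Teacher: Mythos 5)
Your proposal is correct and follows essentially the same route as the paper: apply Theorem~\ref{thm:bipartite:partition-connected} with $l\equiv m$ to obtain the $c$-partite $m$-tree-connected factor $G_0$ with $d_{G_0}(X)\ge\frac{c-1}{c}d_G(X)$, then apply Corollary~\ref{thm:epsilon} with $\epsilon=(c-1)/c$ and bound $d_{G_0}(v)\le d_G(v)$ to get $\lceil\frac{2c-1}{2c}d_G(v)\rceil$ (and the floor bound at the chosen vertex $u$). Your explicit verification of the hypotheses on $l$ and of the matching connectivity thresholds is exactly the bookkeeping the paper leaves implicit.
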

\begin{proof}
{By Theorem~\ref{thm:bipartite:partition-connected}, 
the graph $G$ has a $c$-partite $m$-tree-connected factor $G_0$ such that for every vertex set $X$,
 $d_{G_0}(X)\ge (c-1)d_G(X)/c$.
By applying Theorem~\ref{thm:epsilon} with $\epsilon =(c-1)/c$, the graph $G_0$ has an $m$-tree-connected factor $H$ such that for 
each vertex $v$, $$d_H(v) \le \lceil\frac{1}{2}d_{G_0}(v)
+ \frac{c-1}{2c}d_G(v)\rceil \le \lceil \frac{2c-1}{2c}d_G(v)\rceil.$$
 Hence the proof is completed.
}\end{proof}
\begin{cor}\label{cor:2m:3/4}
{Every $2m$-tree-connected graph $G$ has a bipartite $m$-tree-connected factor $H$ such that for each vertex $v$,
$$d_H(v)\le \lceil \frac{3}{4}d_G(v)\rceil.$$
Furthermore, for an arbitrary given vertex $u$, the upper bound can be reduced to $\lfloor \frac{3}{4}d_G(u)\rfloor.$
}\end{cor}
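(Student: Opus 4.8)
The plan is to obtain this statement as the special case $c=2$ of the preceding corollary (the one stated with a parameter $c\ge 2$). For $c=2$ the hypothesis that $G$ be $mc/(c-1)$-partition-connected reads exactly ``$G$ is $2m$-partition-connected,'' which by the Nash-Williams--Tutte theorem cited in the introduction is equivalent to ``$G$ is $2m$-tree-connected''; a $c$-partite graph with $c=2$ is a bipartite graph; and the degree bound $\lceil \frac{2c-1}{2c}d_G(v)\rceil$ collapses to $\lceil \frac34 d_G(v)\rceil$, while the refined bound $\lfloor \frac{2c-1}{2c}d_G(u)\rfloor$ at the distinguished vertex becomes $\lfloor \frac34 d_G(u)\rfloor$. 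So in principle one line suffices.

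For completeness, here is how the argument unwinds if one prefers not to invoke the general-$c$ corollary directly. First I would take $l$ to be the constant function $m$, which is intersecting supermodular, nonincreasing, nonnegative, and integer-valued, and apply Theorem~\ref{thm:bipartite:partition-connected} with $c=2$: since $G$ is $2m$-partition-connected, i.e. $2l$-partition-connected, it has a bipartite $m$-tree-connected factor $G_0$ with $d_{G_0}(X)\ge d_G(X)/2$ for every vertex set $X$. Next I would apply Corollary~\ref{thm:epsilon} (the $k=1$ reduction of Theorem~\ref{thm:epsilon}) to the pair $G_0\subseteq G$ with $\epsilon=1/2$: the partition-connectivity hypothesis $l/\epsilon=2m$ is met, and the degree condition $d_{G_0}(X)\ge \epsilon\, d_G(X)$ is precisely what Theorem~\ref{thm:bipartite:partition-connected} delivered. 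This produces an $m$-tree-connected factor $H$ of $G_0$ with
$$d_H(v)\le \Big\lceil \tfrac12 d_{G_0}(v)+\tfrac14 d_G(v)\Big\rceil \le \Big\lceil \tfrac34 d_G(v)\Big\rceil,$$
the last inequality because $d_{G_0}(v)\le d_G(v)$. Since $H$ is a subgraph of the bipartite graph $G_0$ it is bipartite, and an $m$-tree-connected subgraph of $G_0$ is an $m$-tree-connected factor of $G$, which settles the main claim. For the ``furthermore'' clause I would instead use the refined per-vertex bound in Corollary~\ref{thm:epsilon}, which bounds the degree at the single distinguished vertex $u$ by $\lfloor d_{G_0}(u)/2+\epsilon\, d_G(u)/2\rfloor=\lfloor d_{G_0}(u)/2+d_G(u)/4\rfloor\le \lfloor \tfrac34 d_G(u)\rfloor$, again using $d_{G_0}(u)\le d_G(u)$.

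I do not expect a genuine obstacle here: the statement is a direct specialization, and the only points to watch are the bookkeeping with the ceiling and floor (handled by the monotonicity $d_{G_0}(v)\le d_G(v)$ of degrees under passing to a subgraph) and the observation that a subgraph of a bipartite graph is bipartite. The real content lies upstream, in Theorem~\ref{thm:bipartite:partition-connected} (producing a bipartite factor that retains half of every cut) and in Theorem~\ref{thm:epsilon} (converting that cut lower bound into a small-degree partition-connected factor).
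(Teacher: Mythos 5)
Your proposal is correct and matches the paper: the statement is indeed presented there as the $c=2$ specialization of the preceding corollary, whose proof is exactly your unwound argument (Theorem~\ref{thm:bipartite:partition-connected} with $l\equiv m$ to get the bipartite factor $G_0$ with $d_{G_0}(X)\ge d_G(X)/2$, then Corollary~\ref{thm:epsilon} with $\epsilon=1/2$). Your bookkeeping with the ceiling/floor bounds and the distinguished vertex $u$ also agrees with the paper's treatment, so there is nothing to add.
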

The next theorem gives another sufficient condition for a graph to satisfy the assumption of Theorem~\ref{thm:sufficient}.
\begin{thm}
Let $G$ be a graph with the spanning subgraph $G_0$, let $k$ be a real number with $k\ge 1$, and let $l$ be an intersecting supermodular real function on 
%*
 subsets of $V(G)$. 
Assume that for every vertex set $A$, $d_{G_0}(A)\ge \frac{1}{k}d_G(A)$.
If $S\subseteq V(G)$, then 
$$\OMEGA_l(G_0\setminus S)\le \frac{1}{k}\OMEGA_{kl}(G\setminus S)
+\sum_{v\in S}\big(\frac{1}{2}d_{G_0}(v)-\frac{1}{2k}d_G(v)\big)-e_{G_0}(S)+\frac{1}{k}e_{G}(S).$$
\end{thm}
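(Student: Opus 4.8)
The plan is to mimic the structure of the proof of Lemma~\ref{lem:epsilon}, but now running the argument against the bigger host graph $G$ (with multiplier $k$) instead of against the subgraph bound $\epsilon\,d_G$. First I would fix $S\subseteq V(G)$ and let $P$ be the partition of $V(G)\setminus S$ obtained from the $l$-partition-connected components of $G_0\setminus S$. With this choice, by definition $\OMEGA_l(G_0\setminus S)=\sum_{A\in P}l(A)-e_{G_0}(P)$, so the whole task is to bound $\sum_{A\in P}l(A)-e_{G_0}(P)$ from above by the claimed expression.

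The key step is to compare $e_{G_0}(P)$ with $e_G(P)$ at the level of the cut-degree sums. Writing $2e_{G_0}(P)=\sum_{A\in P}d_{G_0}(A)-\big(\sum_{v\in S}d_{G_0}(v)-2e_{G_0}(S)\big)$ and similarly for $G$, the hypothesis $d_{G_0}(A)\ge \frac1k d_G(A)$ applied termwise to each $A\in P$ gives
$$
2e_{G_0}(P)\;\ge\;\frac1k\sum_{A\in P}d_G(A)\;-\;\sum_{v\in S}d_{G_0}(v)\;+\;2e_{G_0}(S)
\;=\;\frac1k\Big(2e_G(P)+\sum_{v\in S}d_G(v)-2e_G(S)\Big)-\sum_{v\in S}d_{G_0}(v)+2e_{G_0}(S).
$$
Now I would bring in the partition-connectivity of the ambient structure: applying the definition of $\OMEGA_{kl}$ to $G\setminus S$ with the partition $P$ (which refines, or coincides with, the $kl$-partition-connected component partition) yields $e_G(P)\ge \sum_{A\in P}kl(A)-\OMEGA_{kl}(G\setminus S)$ — more precisely, $\sum_{A\in P}kl(A)-e_G(P)\le \OMEGA_{kl}(G\setminus S)$, since $\OMEGA$ is computed over the optimal (component) partition. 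Substituting the lower bound for $e_{G_0}(P)$ into $\OMEGA_l(G_0\setminus S)=\sum_{A\in P}l(A)-e_{G_0}(P)$, replacing $\sum_{A\in P}l(A)=\frac1k\sum_{A\in P}kl(A)$, and then using $\sum_{A\in P}kl(A)-e_G(P)\le \OMEGA_{kl}(G\setminus S)$ should collapse everything to
$$
\OMEGA_l(G_0\setminus S)\;\le\;\frac1k\,\OMEGA_{kl}(G\setminus S)+\sum_{v\in S}\Big(\frac12 d_{G_0}(v)-\frac1{2k}d_G(v)\Big)-e_{G_0}(S)+\frac1k e_G(S),
$$
which is exactly the claim. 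The role of intersecting supermodularity of $l$ is, as in Lemma~\ref{lem:epsilon}, to guarantee that the $l$- and $kl$-partition-connected components are well defined so that $\OMEGA_l$ and $\OMEGA_{kl}$ make sense and that passing from the $G_0\setminus S$-component partition $P$ to $\OMEGA_{kl}(G\setminus S)$ only loses in the right direction.

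The main obstacle I expect is bookkeeping the two different partitions: $P$ is tailored to the $l$-components of $G_0\setminus S$, but the term $\OMEGA_{kl}(G\setminus S)$ is evaluated on the $kl$-components of $G\setminus S$, a possibly different partition. The clean way around this is to note that $\OMEGA_{kl}(G\setminus S)=\sum_{A\in Q}kl(A)-e_G(Q)$ where $Q$ is the component partition, and that this quantity is the \emph{maximum} of $\sum_{A\in \mathcal R}kl(A)-e_G(\mathcal R)$ over all partitions $\mathcal R$ of $V(G)\setminus S$ (this is the standard extremal characterization of $\OMEGA$), so in particular it dominates the value at $\mathcal R=P$; that inequality is precisely what the chain above uses. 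Once that observation is in place the rest is the termwise degree estimate and algebraic rearrangement, with no further subtlety.
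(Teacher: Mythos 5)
Your proof is correct and takes essentially the same route as the paper: the same degree-sum identity over a partition of $V(G)\setminus S$, the same termwise use of $d_{G_0}(A)\ge \frac{1}{k}d_G(A)$, and the same final appeal to the fact that $\sum_{A\in P}kl(A)-e_{G}(P)\le \Theta_{kl}(G\setminus S)$; indeed your choice of $P$ as the $l$-partition-connected components of $G_0\setminus S$ is precisely the reading that makes the opening equality $\Theta_l(G_0\setminus S)=\sum_{A\in P}l(A)-e_{G_0}(P)$ valid, and your explicit use of the extremal (maximizing) characterization of $\Theta_{kl}$ is exactly what the paper leaves implicit in ``this inequality can complete the proof.'' The only slip is the incidental parenthetical that $P$ refines (or coincides with) the $kl$-component partition of $G\setminus S$ --- it need not --- but this is harmless, since, as you yourself note, the needed inequality follows from the component partition maximizing the deficiency over all partitions.
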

\begin{proof}
{Let $P$ be a partition of $V(G)\setminus S$ obtained from the $l$-partition-connected components of $G\setminus S$.
By the assumption, 
$$\frac{1}{k}e_{G}(P)+\sum_{v\in S}\frac{1}{2k}d_{G}(v)-\frac{1}{k}e_{G}(S)=
\sum_{A\in P}\frac{1}{2k}d_{G}(A) \le
 \sum_{A\in P}\frac{1}{2}d_{G_0}(A)=e_{G_0}(P)+\sum_{v\in S}\frac{1}{2}d_{G_0}(v)-e_{G_0}(S).$$
Therefore, 
$$\OMEGA_l(G_0\setminus S)=\sum_{A\in P}l(A)-e_{G_0}(P)\le 
%\frac{1}{k}\OMEGA_{kl}(G\setminus S)
\frac{1}{k}(\sum_{A\in P}kl(A)\,-e_{G}(P))
+\sum_{v\in S}\big(\frac{1}{2}d_{G_0}(v)-\frac{1}{2k}d_G(v)\big)-e_{G_0}(S)+\frac{1}{k}e_{G}(S).$$
This inequality can complete the proof.
}\end{proof}
%
%==============================================================================
%
%
%
%
%
%
%
%==============================================================================
%
%
%
\section{Bipartite connected modulo regular factors with small degrees}
In 2014 Thomassen~\cite{MR3194201} showed that graphs with high enough edge-connectivity admit
 a bipartite modulo $k$-regular factor. This result is recently developed to the following connected factor version.
\begin{thm}{\rm (\cite{ModuloBounded})}
{Every $(4k-2)$-tree-connected graph admits a bipartite connected modulo $k$-regular factor.
}\end{thm}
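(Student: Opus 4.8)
The plan is to obtain the desired factor by composing two of the results quoted above. Since $4k-2 = 2(2k-1)$, every $(4k-2)$-tree-connected graph $G$ is $2m$-tree-connected with $m = 2k-1$, so the hypothesis of Theorem~\ref{thm:bipartite:tree-connected} is met. First I would apply that theorem to extract a bipartite $(2k-1)$-tree-connected factor $G_0$ of $G$. For the present purpose only the two structural conclusions matter --- that $G_0$ is bipartite and that it is $(2k-1)$-tree-connected --- so the degree estimate $d_{G_0}(X)\ge d_G(X)/2$ will not be needed here.

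Next, I would feed $G_0$ into Theorem~\ref{thm:2k-2}. Because $G_0$ is a $(2k-1)$-tree-connected bipartite graph, that theorem (applied with an arbitrary choice of the distinguished vertex $u$) yields a connected modulo $k$-regular factor $H$ of $G_0$. Since $H$ is a spanning subgraph of the bipartite graph $G_0$, it is itself bipartite, and since $G_0$ is a spanning subgraph of $G$, the subgraph $H$ is a spanning subgraph of $G$ as well. Therefore $H$ is a bipartite connected modulo $k$-regular factor of $G$, which is exactly what is required.

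The argument has essentially no obstacle; the only points needing care are the arithmetic identity $2(2k-1) = 4k-2$, which is what makes Theorem~\ref{thm:bipartite:tree-connected} applicable verbatim, and the observation that ``factor'' consistently means ``spanning subgraph'', so that the two inclusions $H \subseteq G_0 \subseteq G$ compose to place $H$ inside $G$ while preserving both the spanning property and bipartiteness. Thus the theorem follows immediately from Theorems~\ref{thm:bipartite:tree-connected} and~\ref{thm:2k-2}.
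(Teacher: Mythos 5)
Your argument is correct: since $4k-2=2(2k-1)$, Theorem~\ref{thm:bipartite:tree-connected} with $m=2k-1$ gives a bipartite $(2k-1)$-tree-connected factor $G_0$, and Theorem~\ref{thm:2k-2} applied to $G_0$ (with any choice of $u$) yields a connected modulo $k$-regular factor, which remains spanning in $G$ and bipartite. The paper itself states this result only as a citation to \cite{ModuloBounded} and offers no proof, but your two-step composition is essentially the same scheme the paper uses to prove Theorem~\ref{thm:application:toughenough}, namely first extracting a bipartite $(2k-1)$-tree-connected factor and then invoking Theorem~\ref{thm:2k-2}, so your route is in line with the paper's method.
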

In the following, we shall give a sufficient toughness condition for the existence of a bipartite connected modulo regular factor with small degrees.
\begin{thm}\label{thm:application:toughenough}
{Every $16k^2$-tough graph of order at least $3k$ admits a bipartite connected $\{k,2k,3k,4k\}$-factor.
}\end{thm}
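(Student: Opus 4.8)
The plan is to combine the two main tools developed in the paper: the bounded‑degree bipartite tree‑connected factor result (Corollary~\ref{cor:2m:3/4}, in its ``$\lfloor\frac34 d_G(\cdot)\rfloor$'' form) and the bounded‑degree bipartite connected modulo $k$‑regular factor result (Theorem~\ref{thm:2k-2}), and to feed into them a $k$‑factor produced from the toughness hypothesis via the Enomoto--Jackson--Katerinis--Saito theorem. The target degree set $\{k,2k,3k,4k\}$ is exactly what one expects: starting from a graph of bounded degree and repeatedly taking a modulo $k$‑regular sub‑ or super‑structure forces all degrees into multiples of $k$, and the degree bounds $3m+1$ of Theorem~\ref{thm:tough:3m+1} together with the $d_H(v)\le d_G(v)-k+1$ bound of Theorem~\ref{thm:2k-2} are tuned so that the only surviving values are $k,2k,3k,4k$.

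Concretely I would proceed as follows. First, set $m=k$ and invoke Theorem~\ref{thm:tough:3m+1}: since $G$ is $16k^2$‑tough and $16k^2\ge 4k^2=4m^2$ (using $k\ge 1$), and $|V(G)|\ge 3k$; here one must check $|V(G)|\ge 4m=4k$, so a small separate argument is needed when $3k\le|V(G)|<4k$ — I expect this to be handled by noting that such graphs with toughness $16k^2$ are complete or near‑complete, so the conclusion is easy, or by a direct toughness estimate. Assuming $|V(G)|\ge 4k$, we obtain a bipartite $k$‑tree‑connected factor $G_1$ with $\Delta(G_1)\le 3k+1$. Second, apply Theorem~\ref{thm:2k-2} to $G_1$ (which is bipartite and $(2k-1)$‑tree‑connected because $k\ge (2k-1)$ fails for $k\ge 2$, so actually one needs $k$‑tree‑connected $\Rightarrow (2k-1)$‑tree‑connected, which holds only for $k=1$; hence the correct reading is to start from Theorem~\ref{thm:tough:3m+1} with $m=2k-1$, giving a bipartite $(2k-1)$‑tree‑connected factor of maximum degree $\le 3(2k-1)+1=6k-2$, and then apply Theorem~\ref{thm:2k-2}). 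The output $H$ is bipartite, connected, modulo $k$‑regular, with $d_H(v)\le d_{G_1}(v)-k+1\le (6k-2)-k+1=5k-1$ for all $v\ne u$, and one handles $u$ separately. Since $d_H(v)$ is a positive multiple of $k$ and at most $5k-1$, we get $d_H(v)\in\{k,2k,3k,4k\}$ for $v\ne u$, and the exceptional vertex $u$ must be dealt with by choosing $u$ so that its degree is already controlled (e.g.\ a vertex of minimum degree).

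The main obstacles, then, are bookkeeping rather than conceptual: (i) pinning down the exact intermediate tree‑connectivity parameter so that Theorem~\ref{thm:2k-2}'s hypothesis ``$(2k-1)$‑tree‑connected bipartite'' is met while keeping the maximum degree low enough that $d_G(v)-k+1<5k$, which forces the intermediate bound to be at most $5k-1$, i.e.\ we need $\Delta\le 6k-2$; checking $16k^2\ge 4(2k-1)^2$ (true for $k\ge 1$) licenses applying Theorem~\ref{thm:tough:3m+1} with $m=2k-1$; (ii) disposing of the exceptional vertex $u$ from Theorem~\ref{thm:2k-2} — I would argue that because $H$ is connected its degrees are all at least $1$, hence at least $k$ (being multiples of $k$), and if $u$'s degree were $\ge 5k$ we would still need a patch, so the cleanest route is to observe that in the bipartite $(2k-1)$‑tree‑connected intermediate graph every degree is at least $2(2k-1)=4k-2<5k$, wait that is not a multiple of $k$; so instead I would apply the ``furthermore'' clause of Theorem~\ref{thm:tough:3m+1}/Corollary~\ref{cor:2m:3/4} to shave $u$'s degree; and (iii) the order condition, reconciling ``$\ge 3k$'' in the statement with ``$\ge 4(2k-1)=8k-4$'' needed upstream — for $k\ge1$ one has $8k-4\ge 3k$ only when $k\ge 1$ gives $5k\ge4$, true, but then the hypothesis ``order at least $3k$'' is weaker than ``order at least $8k-4$'', so either the theorem is stated for $k=1$ (order $\ge 3$, matching $8k-4=4$... not quite) or small cases $3k\le |V(G)|\le 8k-4$ are again absorbed by the extreme toughness. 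I would therefore structure the proof as: handle the generic large‑order case by the chain Theorem~\ref{thm:tough:3m+1} $\to$ Theorem~\ref{thm:2k-2}, then append a short lemma covering the finitely many small orders using that a $16k^2$‑tough graph on so few vertices is complete and thus trivially has the desired factor.
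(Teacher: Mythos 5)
Your overall route is the same as the paper's: handle $3k\le|V(G)|<8k-4$ by noting that such a $16k^2$-tough graph is complete, otherwise apply Theorem~\ref{thm:tough:3m+1} with $m=2k-1$ (checking $16k^2\ge 4(2k-1)^2$) to get a bipartite $(2k-1)$-tree-connected factor $H$ with $\Delta(H)\le 6k-2$, and then feed $H$ into Theorem~\ref{thm:2k-2} so that every non-exceptional vertex satisfies $0<d_F(v)\le 6k-2-k+1<5k$, hence $d_F(v)\in\{k,2k,3k,4k\}$. (The Enomoto--Jackson--Katerinis--Saito $k$-factor mentioned at the start of your plan plays no role in either argument and can be dropped.)

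However, there is a genuine gap in your treatment of the exceptional vertex $u$ of Theorem~\ref{thm:2k-2}, and neither of your proposed fixes closes it. Theorem~\ref{thm:2k-2} gives no reduction at $u$, so all you know there is $d_F(u)\le d_H(u)$. Choosing $u$ to be a vertex of minimum degree in $H$ does not help, since $H$ could be $(6k-2)$-regular (this is compatible with being $(2k-1)$-tree-connected), and the ``furthermore'' clause of Corollary~\ref{cor:2m:3/4} only lowers a ceiling to a floor, i.e.\ saves at most $1$, giving $d_F(u)\le 6k-3$; for $k\ge 3$ this still allows $d_F(u)=5k$, which is outside $\{k,2k,3k,4k\}$. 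The paper's missing ingredient is to first replace $H$ by a \emph{minimally} $(2k-1)$-tree-connected spanning subgraph: such a graph has exactly $(2k-1)(|V(H)|-1)$ edges, so some vertex $u$ has degree at most $2(2k-1)=4k-2$, and this $u$ is then designated as the special vertex in Theorem~\ref{thm:2k-2}, forcing $d_F(u)\le 4k-2$ and hence automatically $d_F(u)\in\{k,2k,3k,4k\}$. (Also, your parenthetical claim that every degree in a $(2k-1)$-tree-connected graph is at least $2(2k-1)$ is false --- tree-connectivity only forces minimum degree at least $2k-1$ --- but that remark was not load-bearing.)
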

\begin{proof}
{If $|V(G)| < 8k-4$, the graph $G$ must be complete and the proof is straightforward.
So, suppose $|V(G)| \ge 8k-4$.
By Theorem~\ref{thm:tough:3m+1}, the graph $G$ has a bipartite $(2k-1)$-tree-connected factor $H$ such that 
$\Delta(H)\le 3(2k-1)+1=6k-2$.
We may asume that $H$ is minimally $(2k-1)$-tree-connected and so it has a vertex $u$ with $d_{H}(u)\le 2(2k-1)$.
By Theorem~\ref{thm:2k-2}, the graph $H$ has a connected factor $F$
 with degrees divisible by $k$ such that for each $v\in V(F)\setminus u$, 
$$0< d_F(v) \le d_{H}(v)-k+1 \le 6k-2-k+1<5k.$$
In addition, for the special vertex $u$, we must automatically have $d_F(u) \le 4k$.
Hence $F$ is the desired factor we are looking for.
}\end{proof}
A special case of Theorem~\ref{thm:application:toughenough} can be refined to the following stronger version.
\begin{thm}
Every $16$-tough graph of order at least six admits a bipartite connected $\{2,4,6\}$-factor.
\end{thm}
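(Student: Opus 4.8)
The plan is to follow the same strategy used in the proof of Theorem~\ref{thm:application:toughenough}, but to track the degrees more carefully so that every degree ends up in $\{2,4,6\}$ rather than merely in $\{2,4,6,8\}$ (which is what the generic argument with $k=2$ would give). First I would dispose of small orders: if $|V(G)|$ is below the threshold where the application of Theorem~\ref{thm:tough:3m+1} becomes available (here $|V(G)|<8k-4=12$), then a $16$-tough graph of order at least six is complete, and a complete graph of order between $6$ and $11$ directly admits a bipartite connected $\{2,4,6\}$-factor by an explicit construction (one can take a suitable complete bipartite or near-regular bipartite spanning subgraph). So I assume $|V(G)|\ge 12$.

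Next, apply Theorem~\ref{thm:tough:3m+1} with $m=3$ (using $4m^2=36\le 16\cdot 4$, wait --- here $k=2$ so $16k^2=64\ge 4\cdot 9=36$): a $16$-tough graph, hence certainly $36$-tough, of order at least $12\ge 4m$ has a bipartite $3$-tree-connected factor $H$ with $\Delta(H)\le 3\cdot 3+1=10$. Now take $H$ to be minimally $3$-tree-connected; then $H$ has a vertex $u$ with $d_H(u)\le 2\cdot 3=6$. Apply Theorem~\ref{thm:2k-2} with $k=2$ (so $2k-1=3$): $H$ being $3$-tree-connected bipartite gives a connected modulo $2$-regular factor $F$ with $d_F(v)\le d_H(v)-2+1=d_H(v)-1\le 9$ for every $v\ne u$, and $d_F(u)\le 6$ automatically. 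Since the degrees of $F$ are positive and even, $d_F(v)\le 9$ forces $d_F(v)\in\{2,4,6,8\}$ for $v\ne u$ and $d_F(u)\in\{2,4,6\}$.

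The main obstacle is therefore eliminating the possibility $d_F(v)=8$. The refinement will come from squeezing the bound $\Delta(H)\le 10$ down by one, so that $d_H(v)-1\le 8$ becomes $d_H(v)-1\le 7<8$ and $d_F(v)\in\{2,4,6\}$. Concretely, I would either (i) invoke the sharper degree conclusion available from Corollary~\ref{cor:2m:3/4} together with the proof of Theorem~\ref{thm:tough:3m+1}, tracking that for all but one vertex the floor bound $\lfloor\frac34 d_G(v)\rfloor$ applies and that in the relevant regime this yields $d_H(v)\le 9$; or (ii) use the ``furthermore'' clause of Theorem~\ref{thm:epsilon}/Corollary~\ref{cor:2m:3/4} which allows reducing the bound at a chosen vertex --- choosing that vertex to be the low-degree vertex $u$ supplied by minimality, and separately arguing the remaining vertices have small enough degree in $G_0$. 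Either way one shows $H$ can be taken with $\Delta(H)\le 9$, and then $d_F(v)\le d_H(v)-1\le 8$ together with evenness and a parity/extremal argument (if $d_H(v)=9$ then $d_F(v)\le 8$; if $d_H(v)\le 8$ then $d_F(v)\le 7$, hence $\le 6$) rules out $8$ except possibly when $d_H(v)=9$ exactly, which is the last case to handle.

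To close that last case I would argue that a vertex with $d_H(v)=9$ in a bipartite $3$-tree-connected graph of maximum degree $9$ forces $d_G(v)\ge 12$, and then re-examine the degree bookkeeping of Theorem~\ref{thm:2k-2}: the slack in the inequality $d_F(v)\le d_H(v)-k+1$ can be improved for high-degree vertices when $H$ is minimally tree-connected away from $u$, because such a vertex lies on few spanning trees' worth of essential edges. Alternatively --- and this is probably cleaner --- start from a bipartite $m$-tree-connected factor with the stronger degree control $d_H(v)\le\lceil\frac34 d_G(v)\rceil$ of Corollary~\ref{cor:2m:3/4} rather than the cruder $3m+1$ bound, and observe that the toughness hypothesis ($16$-tough, order $\ge 6$) forces every vertex to have $d_G(v)$ large but the connected modulo-$2$ factor only needs degrees in $\{2,4,6\}$, so the real content is that $F$ can be chosen with all degrees $\le 6$; this is exactly the edge case where the $\lceil 3/4\rceil$ versus $\lfloor 3/4\rfloor$ distinction and the single-vertex reduction are designed to be exploited. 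I expect the write-up to be a short computation once the right version of the factor theorem is plugged in, with the only genuine subtlety being the treatment of the exceptional vertex $u$ and of vertices achieving $d_H(v)=9$.
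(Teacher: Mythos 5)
There is a genuine gap, and it appears at the very first step. You apply Theorem~\ref{thm:tough:3m+1} with $m=3$, whose hypothesis is that the graph is $4m^2=36$-tough; but the theorem being proved only assumes $16$-toughness, and your remark ``a $16$-tough graph, hence certainly $36$-tough'' is backwards: being $t$-tough for larger $t$ is a \emph{stronger} condition, so a $16$-tough graph need not be $36$-tough. Hence the bipartite $3$-tree-connected factor with $\Delta(H)\le 10$ that your whole argument rests on is not available. (Running the generic argument of Theorem~\ref{thm:application:toughenough} with $k=2$ would require $16k^2=64$-toughness; the entire point of this refined statement is that $16$ suffices.) Even if one granted that factor, your elimination of degree $8$ is not carried out: Corollary~\ref{cor:2m:3/4} only gives bounds of the shape $\lceil \frac34 d_G(v)\rceil$, which for a tough graph (whose minimum degree is at least twice the toughness) is far larger than $10$, the ``furthermore'' clause improves the bound at a single chosen vertex only, and the claimed extra slack in Theorem~\ref{thm:2k-2} at high-degree vertices of a minimally tree-connected graph is not supported by anything in the paper. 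So the plan both starts from an unavailable hypothesis and leaves its key refinement unproved.

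The paper's proof takes a different and much simpler second stage that you missed: apply Theorem~\ref{thm:tough:3m+1} with $m=2$ (a $16$-tough graph of order at least $16$, say, is handled; smaller orders force completeness), obtaining a bipartite $2$-tree-connected factor $H$ with $\Delta(H)\le 3\cdot 2+1=7$. Since $H$ contains two edge-disjoint spanning trees, Jaeger's theorem on sub-Eulerian graphs yields a spanning Eulerian subgraph $F$ of $H$: it is connected and all its degrees are even, positive, and at most $7$, hence in $\{2,4,6\}$, and $F$ is bipartite as a subgraph of $H$. No modulo-factor theorem, no exceptional vertex, and no degree bookkeeping beyond $\Delta(H)\le 7$ is needed. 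If you want to salvage your approach, the lesson is that for this special case the modulo $2$-regular connected factor should be produced by the Eulerian-subgraph argument from $2$-tree-connectivity rather than by Theorem~\ref{thm:2k-2} from $3$-tree-connectivity.
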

\begin{proof}
{If $|V(G)| <16$, the graph $G$ must be complete and the proof is straightforward. 
If $|V(G)| \ge 16$, then by Theorem~\ref{thm:tough:3m+1}, 
the graph $G$ has a bipartite $2$-tree-connected factor $H$ such that $\Delta(H)\le 7$.
Now, it is enough to consider a spanning Eulerian subgraph of $H$ \cite{MR519177}.
}\end{proof}
%
%
%==============================================================================
%
%
%
%
%
%
%
%
%
%
%==============================================================================	
%\section*{Acknowledgment}
%
%==============================================================================

%\bibliographystyle{siam}
%\bibliography{ref}

\begin{thebibliography}{10}

\bibitem{MR0316301}
 V.~Chv\'atal, 
 Tough graphs and {H}amiltonian circuits, 
Discrete Math.
 5 (1973),~215--228.


\bibitem{MR1740929}
 M.N. Ellingham and X.~Zha, Toughness, trees, and walks, J. Graph
 Theory 33 (2000),~125--137.

\bibitem{MR785651}
 H.~Enomoto, B.~Jackson, P.~Katerinis, and A.~Saito, Toughness and
 the existence of {$k$}-factors, J. Graph Theory 9 (1985),~87--95.


\bibitem{MR519177}
 F.~Jaeger, A note on sub-{E}ulerian graphs, J. Graph Theory 3
 (1979),~91--93.


\bibitem{II}
M. Hasanvand, 
Spanning trees and spanning Eulerian subgraphs with small degrees. II, 
arXiv:1702.06203.

\bibitem{ModuloBounded}
M. Hasanvand, Modulo orientations with bounded out-degrees and modulo factors with bounded
degrees, arXiv:1702.07039.

\bibitem{P}
M. Hasanvand, Packing spanning partition-connected subgraphs with small degrees,
arXiv:1806.00135



\bibitem{MR0133253}
C.St.J.A. Nash-Williams, Edge-disjoint spanning trees of finite
 graphs, J. London Math. Soc. 36 (1961),~445--450.

\bibitem{MR2501526}
 C.~Thomassen, Edge-decompositions of highly connected graphs into paths,
 Abh. Math. Semin. Univ. Hambg.
 78 (2008),~17--26.


\bibitem{MR3194201}
 C.~Thomassen, 
 Graph factors modulo
 {$k$}, J. Combin. Theory Ser. B, 106 (2014),~174--177.



\bibitem{MR0140438}
W.~T. Tutte, 
 On the problem of decomposing a graph into {$n$}
 connected factors, 
J. London Math. Soc., 
36 (1961), pp.~221--230.

\bibitem{MR998275}
 S.~Win, On a connection between the existence of {$k$}-trees and the
 toughness of a graph, Graphs Combin. 5 (1989),~201--205.


\end{thebibliography}
\end{document}